\newtheorem*{rep@theorem}{\rep@title}
\newcommand{\newreptheorem}[2]{%
\newenvironment{rep#1}[1]{%
 \def\rep@title{#2 \ref{##1}}%
 \begin{rep@theorem}}%
 {\end{rep@theorem}}}
\newtheorem{intro_thm}{Theorem}
\newtheorem{intro_prop}[intro_thm]{Proposition}
\theoremstyle{plain}
\newtheorem{teor}{Theorem}[section]
\newtheorem{lem}[teor]{Lemma}
\newtheorem{prop}[teor]{Proposition}
\newtheorem{setup}[teor]{Setup}
\theoremstyle{definition}
\newtheorem{deft}[teor]{Definition}
\newtheorem{es}[teor]{Example}
\theoremstyle{remark}
\newtheorem{oss}[teor]{Remark}
\newtheorem{notation}[teor]{Notation}
\DeclareMathOperator\sign{sign}
\DeclareMathOperator\vol{Vol}
\DeclareMathOperator\upC{\textup{C}}
\DeclareMathOperator\upH{\textup{H}}
\DeclareMathOperator\upI{\textup{I}}
\DeclareMathOperator\upL{\textup{L}}
\DeclareMathOperator\NCT{\mbox{NCT}}
\DeclareMathOperator\bbC{\mathbb{C}}
\DeclareMathOperator\bbH{\mathbb{H}}
\DeclareMathOperator\bbN{\mathbb{N}}
\DeclareMathOperator\bbR{\mathbb{R}}
\DeclareMathOperator\bbS{\mathbb{S}}
\DeclareMathOperator\calB{\mathcal{B}}
\DeclareMathOperator\calN{\mathcal{N}}
\DeclareMathOperator\calS{\mathcal{S}}
\DeclareMathOperator\calX{\mathcal{X}}
\DeclareMathOperator\pu{\textup{PU}}
\DeclareMathOperator\po{\textup{PO}}
\DeclareMathOperator\comp{comp}
\DeclareMathOperator\trans{trans}
\title[Multiplicative constants and maximal cocycles]{Multiplicative constants and maximal measurable cocycles in bounded cohomology}
\author[]{M. Moraschini}
\address{Fakult\"{a}t f\"{u}r Mathematik, Universit\"{a}t Regensburg, 93040 Regensburg, Germany}
\email{marco.moraschini@ur.de}
\author[]{A. Savini}
\address{Section de Math\'ematiques, University of Geneva, Rue Du Conseil-General, Geneva 1205, Switzerland}
\email{Alessio.Savini@Unige.ch}
\thanks{}
\keywords{lattice, Zimmer cocycle, boundary map, bounded cohomology, transfer map, maximal cocycle}
\subjclass[2010]{}
\date{\today.\ \copyright{\ M.~Moraschini, A. Savini 2019}.
  The first author was supported by the CRC~1085 \emph{Higher Invariants}
  (Universit\"at Regensburg, funded by the DFG). The second author was partially supported by the project \emph{Geometric and harmonic analysis with applications}, funded by EU Horizon 2020 under the Marie Curie grant agreement No 777822.}
\begin{document}

\begin{abstract}
Multiplicative constants are a fundamental tool in the study of maximal representations. In this paper we show how to extend such notion, and the associated framework, to measurable cocycles theory. As an application of this approach, we define and study the Cartan invariant for measurable $\textup{PU}(m,1)$-cocycles of complex hyperbolic lattices. 
\end{abstract}

\maketitle


\section{Introduction}

A fruitful approach to the study of geometric structures on a topological space $X$ is to introduce a bounded numerical invariant whose maximum detects those structures on $X$ which have many symmetries. An instance of this situation is the study of the representation space of lattices in (semi)simple Lie groups. More precisely, given two simple Lie groups of non-compact type $G,G'$ and a lattice $\Gamma \leq G$, Burger and Iozzi~\cite{BIuseful} described how to associate to every representation $\rho \colon \Gamma \rightarrow G'$ a real number. Using the pullback map $\upH^\bullet_{cb}(\rho)$ induced by $\rho$ in continuous bounded cohomology, they defined a numerical invariant $\lambda(\rho)$, which depends on a chosen class  $\Psi' \in \upH^\bullet_{cb}(G';\bbR)$, as follows: 
$$
\lambda(\rho) \coloneqq \langle \comp^\bullet_\Gamma \circ \upH^\bullet_{cb}(\rho)(\Psi'), [\Gamma \backslash \calX] \rangle \ ,
$$
where $\comp^\bullet_\Gamma$ denotes the comparison map (Section~\ref{subsec:transfer:maps}), $\calX$ denotes the Riemannian symmetric space associated to $G$, $[\Gamma \backslash \calX]$ is the (relative) fundamental class of the quotient manifold and $\langle \cdot, \cdot \rangle$ is the Kronecker product. We say that $\lambda(\rho)$ is a \emph{multiplicative constant} if it appears in an integral formula, called \emph{useful formula} by Burger and Iozzi~\cite{BIuseful}. When $\lambda$ is a multiplicative constant, the formula implies that the numerical invariant has bounded absolute value. 
In several cases~\cite{bucher2:articolo,Pozzetti,BBIborel}, its maximum corresponds precisely to representations induced by representations of the ambient group. 

\subsection{A multiplicative formula for measurable cocycles}

One of the main goal of this paper is to settle the foundational framework to define multiplicative constants for measurable cocycles. We carefully choose a setting where we can coherently extend ordinary numerical invariants for representations. Moreover, we introduce an integral formula in such a way that our definition of multiplicative constants is the natural extension of Burger-Iozzi's one. Our techniques make use of bounded cohomology theory.

Let $G,G'$ be two locally compact groups and let $L,Q \leq G$ be two closed subgroups. Assume that $Q$ is \emph{amenable} and that $L$ is a lattice. Let $(X,\mu_X)$ be a \emph{standard Borel probability $L$-space} and let $Y$ be a measurable $G'$-space. Following Burger-Iozzi's approach, given a measurable cocycle $\sigma \colon L \times X \rightarrow G'$, we define the \emph{pullback induced by $\sigma$} in continuous bounded cohomology using directly continuous cochains on the groups (Definition~\ref{def:pullback:cocycle}). 
Unfortunately, this approach does not lead to the desired multiplicative formula. 
For this reason, we need to consider boundary maps. 
A \emph{(generalized) boundary map} $\phi \colon G/Q \times X \rightarrow Y$ is a measurable $\sigma$-equivariant map and its existence is strictly related to the properties of $\sigma$ (Remark~\ref{oss:esistenza:mappe:bordo}). Inspired by the definition of Bader-Furman-Sauer's Euler number~\cite{sauer:articolo}, assuming the existence of a boundary map $\phi$, we describe how to construct a new \emph{pullback map} $\upC^\bullet(\Phi^X)$ in terms of $\phi$ (Definition \ref{def:pullback:not:fibered}). 
The notation $\upC^\bullet(\Phi^X)$ emphazises the fact that it is not simply the pullback along $\phi$, but we also need to integrate over $X$ (compare with Definitions~\ref{def:pullback:boundary} and~\ref{def:integration:map}).
The map induced by $\upC^\bullet(\Phi^X)$ in continuous bounded cohomology agrees with the natural pullback along $\sigma$ (Lemma \ref{lem:pullback:implemented:boundary:map}).

Our aim is to coherently extend the study of numerical invariants of representations to the case of measurable cocycles. 
Recall that given a continuous representation $\rho \colon L \rightarrow G'$ with boundary map $\varphi$, there always exists a natural \emph{measurable cocycle} $\sigma_\rho$ associated to it. Using the previous pullback $\upC^\bullet(\Phi^X)$, we then show that the map induced by $\rho$ in continuous bounded cohomology agrees with the one induced by $\sigma_\rho$ (Proposition~\ref{prop:pullback:coc:vs:repr}). Moreover, the pullback along $\sigma$ is invariant along the $G'$-cohomology class of the cocycle (Proposition~\ref{prop:invariance:cohomology}).

The study of pullback maps along measurable cocycles (and their boundary maps) leads to the following \emph{multiplicative formula}, which extends Burger-Iozzi's  useful formula~\cite[Proposition 2.44, Principle 3.1]{BIuseful}. Recall that the \emph{transfer map} is a cohomological left inverse of the restriction from $G$ to $L$.

\begin{intro_prop}[Multiplicative formula]\label{prop:baby:formula}
Keeping the notation above, let $\psi' \in \calB^\infty(Y^{\bullet+1};\bbR)^{G'}$ be an everywhere defined $G'$-invariant cocycle. Let $\psi \in \upL^\infty((G/Q)^{\bullet+1})^G$ be a $G$-invariant cocycle and let $\Psi \in \upH^\bullet_{cb}(G;\bbR)$ denote the class of $\psi$. Assume that $\Psi=\textup{trans}_{G/Q}^{\bullet} [\upC^\bullet(\Phi^X)(\psi')]$, where $\textup{trans}_{G/Q}^\bullet$ is the trasfer map. 
\begin{enumerate}
	\item We have that
$$
\int_{L \backslash G} \int_X \psi'(\phi(\overline{g}.\eta_1, x), \ldots, \phi(\overline{g}.\eta_{\bullet+1}, x)) d\mu_X(x) d\mu(\overline{g}) = \psi(\eta_1, \ldots, \eta_{\bullet+1}) +  \textup{cobound.} \ ,
$$
for almost every $(\eta_1,\ldots,\eta_{\bullet+1}) \in (G/Q)^{\bullet+1}$.
	\item If $\upH^\bullet_{cb}(G;\bbR) \cong \bbR \Psi  (= \bbR[\psi])$, then there exists a real constant $\lambda_{\psi',\psi}(\sigma) \in \bbR$ depending on $\sigma,\psi',\psi$ such that
\begin{align*}
\int_{L\backslash G} \int_X \psi'(\phi(\overline{g}.\eta_1, x), \ldots, \phi(\overline{g}.\eta_{\bullet+1}, x)) d\mu_X(x) d\mu(\overline{g})&=\lambda_{\psi',\psi}(\sigma) \cdot \psi(\eta_1,\ldots,\eta_{\bullet+1}) \\ &+\textup{cobound.} \ ,
\end{align*}
for almost every $(\eta_1,\ldots,\eta_{\bullet+1}) \in (G/Q)^{\bullet+1}$.
\end{enumerate}
\end{intro_prop} 

Although this formula might appear slightly complicated at first sight, it contains all the ingredients for defining the \emph{multiplicative constant} $\lambda_{\psi',\psi}(\sigma)$ associated to a measurable cocycle $\sigma$ and two given bounded cochains $\psi,\psi'$ (Definition~\ref{def:multiplicative:constant}). When no coboundary terms appear in the previous formula, we provide an explicit upper bound for the multiplicative constant (Proposition~\ref{prop:multiplicative:upperbound}).
This leads to the definition of \emph{maximal measurable cocycles} (Definition \ref{def:maximal:cocycle}). Finally, under suitable hypothesis, we prove that a maximal cocycle is \emph{trivializable} (Theorem~\ref{teor:coniugato:standard:embedding}), i.e. it is cohomologous to a cocycle induced by a representation $L \leq G \rightarrow G'$. 

This general framework has the great advantage that we can easily deduce several applications (Section~\ref{subsec:applications:baby} and Section~\ref{sec:concluding:remarks}).

\subsection{Cartan invariant of measurable cocycles}

Let $\Gamma \leq \pu(n,1)$ be a torsion-free lattice with $n \geq 2$. The study of representations of $\Gamma$ into $\pu(m,1)$ dates back to the work of Goldman and Millson~\cite{GoldM}, Corlette~\cite{Cor88} and Toledo~\cite{Toledo89}. In order to investigate rigidity properties of maximal representations $\rho \colon \Gamma \rightarrow \pu(m,1)$, Burger and Iozzi~\cite{BIcartan} defined the \emph{Cartan invariant} $i_\rho$ associated to $\rho$. Inspired by their work, we make use of  our techniques to define the \emph{Cartan invariant} $i(\sigma)$ for a measurable cocycle $\sigma:\Gamma \times X \rightarrow \pu(m,1)$, where $(X,\mu_X)$ is a standard Borel probability $\Gamma$-space.

If the cocycle admits a boundary map (e.g. if it is \emph{non elementary}), the Cartan invariant can be realized as the multiplicative constant associated to $\sigma$ and the Cartan cocycles $c_n,c_m$. More precisely, as an application of Proposition~\ref{prop:baby:formula}, we prove the following

\begin{intro_prop}\label{prop:cartan:multiplicative:cochains}
Let $\Gamma \leq \pu(n,1)$ be a torsion-free lattice and let $(X,\mu_X)$ be a standard Borel probability space. Consider a non-elementary measurable cocycle $\sigma \colon \Gamma \times X \rightarrow \pu(m,1)$ with boundary map $\phi \colon \partial_\infty \bbH^n_{\bbC} \times X \rightarrow \partial_\infty \bbH^m_{\bbC}$.
Then, for every triple of pairwise distinct points $\xi_1,\xi_2,\xi_3 \in \partial_\infty \bbH^n_{\bbC}$, we have
\begin{small}
\begin{equation*}
i(\sigma)c_n(\xi_1,\xi_2,\xi_3)=\int_{\Gamma \backslash \pu(n,1)} \int_X c_m(\phi(\overline{g}\xi_1,x),\phi(\overline{g}\xi_2,x),\phi(\overline{g}\xi_3,x))d\mu(\overline{g})d\mu_X(x) \ .
\end{equation*}
\end{small}
Here $\mu$ is a $\pu(n,1)$-invariant probability measure on the quotient $\Gamma \backslash \pu(n,1)$. 
\end{intro_prop}

First we show that our Cartan invariant extends the one defined for representations (Proposition~\ref{prop:cartan:rep}). Moreover, using our results about the pullback along boundary maps, we prove that the Cartan invariant is constant along $\pu(m,1)$-cohomology classes and it has absolute value bounded by $1$ (Proposition \ref{prop:cartan:cohomology:bound}).

Then, a natural problem is to provide a complete characterization of measurable cocycles whose Cartan invariant attains extremal values, i.e. either $0$ or $1$. Since we are not interested in elementary cocycles, we can assume the existence of a boundary map~\cite[Proposition 3.3]{MonShal0}. 

Following the work by Burger and Iozzi~\cite{BIreal}, we introduce the notion of \emph{totally real cocycles}. A cocycle is totally real if it is cohomologous to a cocycle whose image is contained in a subgroup of $\pu(m,1)$ preserving a totally geodesically embedded copy $\bbH^k_{\bbR} \subset \bbH^m_{\bbR}$, for some $1 \leq k \leq m$ (Definition \ref{def:totally:real}). Totally real cocycles can be easily constructed by taking the composition of a measure equivalence cocycle with a totally real representation. 

We show that totally real cocycles have trivial Cartan invariant. The converse seems unlikely to hold in general. However, if $X$ is $\Gamma$-ergodic, we obtain the following 

\begin{intro_thm}\label{teor:totally:real}
Let $\Gamma \leq \pu(n,1)$ be a torsion-free lattice and and let $(X,\mu_X)$ be a standard Borel probability $\Gamma$-space. 
Consider a non-elementary measurable cocycle $\sigma \colon \Gamma \times X \rightarrow \pu(m,1)$ with boundary map $\phi \colon \partial_\infty \mathbb{H}^n_{\bbC} \times X \rightarrow \partial_\infty \bbH^m_{\bbC}$. Then the following hold
\begin{enumerate}
	\item  If $\sigma$ is totally real, then $i(\sigma)=0$;
	\item  If $X$ is $\Gamma$-ergodic and $\textup{H}^2(\phi)([c_m])=0$, then $\sigma$ is totally real.
\end{enumerate}
\end{intro_thm}

The next step in our investigation is the study of the \emph{algebraic hull} of a cocycle with non-vanishing pullback. Recall that the algebraic hull is the smallest algebraic group containing the image a cocycle cohomologous to $\sigma$ (Definition \ref{def:alg:hull}). 

\begin{intro_thm}\label{teor:alg:hull}
Let $\Gamma \leq \textup{PU}(n,1)$ be a torsion-free lattice and let $(X,\mu_X)$ be an ergodic standard Borel probability $\Gamma$-space. Consider a non-elementary measurable cocycle $\sigma \colon \Gamma \times X \rightarrow \pu(m,1)$ with boundary map $\phi \colon \partial_\infty \bbH^n_{\bbC} \times X \rightarrow \partial_\infty \bbH^m_{\bbC}$.  Let $\mathbf{L}$ be the algebraic hull of $\sigma$ and denote by $L=\mathbf{L}(\bbR)^\circ$ the connected component of the identity of the real points. 

If $\upH^2(\Phi^X)([c_m])\neq 0$, then $L$ is an almost direct product $K \cdot M$, where $K$ is compact and $M$ is isomorphic to $\textup{PU}(p,1)$ for some $1 \leq p \leq m$. 

In particular, the symmetric space associated to $L$ is a totally geodesically embedded copy of $\bbH^p_{\bbC}$ inside $\bbH^m_{\bbC}$. 
\end{intro_thm}

We conclude with a complete characterization of maximal cocycles.

\begin{intro_thm}\label{teor:cartan:rigidity}
Consider $n \geq 2$. Let $\Gamma \leq \pu(n,1)$ be a torsion-free lattice. Let $(X,\mu_X)$ be an ergodic standard Borel probability $\Gamma$-space. Consider a maximal measurable cocycle $\sigma \colon \Gamma \times X \rightarrow \pu(m,1)$. 
Let $\mathbf{L}$ be the algebraic hull of $\sigma$ and let $L=\mathbf{L}(\bbR)^\circ$ be the connected component of the identity of the real points. 

Then, we have
\begin{enumerate}
\item $m \geq n$; 

\item $L$ is an almost direct product $\textup{PU}(n,1) \cdot K$, where $K$ is compact;

\item $\sigma$ is cohomologous to the cocycle $\sigma_i$ associated to the standard lattice embedding $i \colon \Gamma \to \pu(m,1)$ (possibly modulo the compact subgroup $K$ when $m >n$). 
\end{enumerate}
\end{intro_thm}

Since recently one of the authors together with Sarti proved a generalization of the previous theorem for cocycles with target $\textup{PU}(p,q)$~\cite[Theorem 2]{sarti:savini}, we will mainly refer to their more complete result for the proof.

\subsection*{Plan of the paper}

In Section~\ref{sec:prel}, we recall some preliminary definitions and results that we need in the paper. We report  the definitions of amenable action, measurable cocycle, boundary map and algebraic hull in Section~\ref{subsec:zimmer:cocycle}. We then review Burger and Monod's functorial approach to continuous bounded cohomology (Section~\ref{subsec:burger:monod}) and we conclude this preliminary section with the definition of transfer maps (Section~\ref{subsec:transfer:maps}).

Section \ref{sec:easy:formula} is devoted to the description of the general framework in which we study multiplicative constants associated to measurable cocycles. There, we first define the pullback along a measurable cocycle and along its boundary map (Section~\ref{subsec:pullback:boundary}). Then, we compare our definition with the usual one given for representations (Section~\ref{subsec:rep:coc}). In Section \ref{subsec:easy:mult:formula} we state our multiplicative formula (Proposition~\ref{prop:baby:formula}) and we introduce the notion of multiplicative constant associated to a measurable cocycle. We conclude the section studying the notion of maximality (Section~\ref{subsec:multiplicative:constant}) and showing some applications of the previous results (Section~\ref{subsec:applications:baby}).

Section~\ref{sec:cartan:invariant} contains the new application of our machinery. There, we introduce and study the Cartan invariant of measurable cocycles (Section~\ref{sec:cartan:invariant}). We prove that it is a multiplicative constant (Proposition \ref{prop:cartan:multiplicative:cochains}) and it extends the same invariant for representations (Proposition \ref{prop:cartan:rep}). Moreover, we show that the Cartan invariant has bounded absolute value in Proposition~\ref{prop:cartan:cohomology:bound}.

In Section \ref{sec:tot:real} we define totally real cocycles and we prove Theorem \ref{teor:totally:real}. Then in Section \ref{sec:cartan:rigidity} we study maximal measurable cocycles and we prove both Theorem \ref{teor:alg:hull} and Theorem \ref{teor:cartan:rigidity}. We conclude with some remarks about recent applications of our theory in Section \ref{sec:concluding:remarks}.

\subsection*{Acknowlegdements}

We truly thank the anonymous referee for the detailed report which allowed to substantially improve the quality of our paper. 

\section{Preliminary definitions and results}\label{sec:prel}

\subsection{Amenability and measurable cocycles} \label{subsec:zimmer:cocycle} 
In this section we are going to recall some classic definitions related to both amenability and measurable cocycles.
We start fixing the following notation:
\begin{itemize}
\item Let $G$ be a locally compact second countable group endowed with its Haar measurable structure.

\item Let $(X,\mu)$ be a \emph{standard Borel measure $G$-space}, i.e. a standard Borel measure space endowed with a measure-preserving $G$-action.
\end{itemize}
If $\mu$ is a probability measure, we will refer to $(X, \mu)$ as a \emph{standard Borel probability $G$-space}. Given another measure space $(Y,\nu)$, we denote by $\textup{Meas}(X,Y)$ the space of measurable functions from $X$ to $Y$ endowed with the topology of convergence in measure.
\begin{oss}
In the literature about the ergodic version of simplicial volume~\cite{Sauer, Sthesis, FFM,LP,FLPS,FLF,Camp-Corro,FLMQ},
it is often convenient to work with essentially free actions. For this reason, one might find reasonable to stick with the same assumptions also here working with the dual notion of bounded cohomology. However, it is easy to check that every probability measure-preserving action can be promoted to an essentially-free action just by taking the product with an essentially free action and considering the diagonal action on that product. 
\end{oss}

We recall now some definitions about amenability. We mainly refer the reader to the books by Zimmer~\cite[Section 4.3]{zimmer:libro} and by Monod~\cite[Section~5.3]{monod:libro} for further details about this topic.

Let $\upL^\infty(G;\bbR)$ denote the space of essentially bounded real functions over $G$. Then, $G$ acts on $\upL^\infty(G;\bbR)$ as follows
$$
g.f (g_0) = f(g^{-1} g_0) \ ,
$$
for all $g, g_0 \in \, G$ and $f \in \, \upL^\infty(G;\bbR)$.

\begin{deft}\label{def:amenable:group}
A \emph{mean} on $\upL^\infty(G;\bbR)$ is a continuous linear functional
$$
m \colon \upL^\infty(G;\bbR) \rightarrow \bbR \ ,
$$
such that $m(f)\geq 0$ whenever $f \geq 0$ and $m(\chi_G)=1$, where $\chi_G$ denotes the characteristic function on $G$. 

We say that a mean is \emph{left invariant} if for all $g \in \, G$ and $f \in \, \upL^\infty(G;\bbR)$ we have $m(g.f) = m(f)$.

A group is \emph{amenable} if it admits a left-invariant mean.
\end{deft}

\begin{es}\label{es:amenable:groups}
The following families are examples of amenable groups:
\begin{enumerate}
\item Abelian groups~\cite[Theorem 4.1.2]{zimmer:libro};

\item Finite/Compact groups~\cite[Theorem 4.1.5]{zimmer:libro};

\item Extensions of amenable groups by amenable groups~\cite[Proposition 4.1.6]{zimmer:libro};


\item Let $G$ be a Lie group and let $P \subset G$ be any minimal parabolic subgroup. Then, $P$ is an extension of a solvable group by a compact group, whence it is amenable~\cite[Corollary 4.1.7]{zimmer:libro}.
\end{enumerate}
\end{es}

In the sequel we will need a more general notion of amenability which is related to group actions. In fact, amenable spaces and amenable actions will play a crucial role in the functorial approach to the computation of continuous bounded cohomology (Section \ref{subsec:burger:monod}). Following Monod's convention, we begin by defining \emph{regular} $G$-{spaces}~\cite[Definition~2.1.1]{monod:libro}.

\begin{deft}
Let $G$ be a locally compact second countable group and let $S$ be a standard Borel space with a measurable $G$-action which preserve a measure class. We say that $(S, \mu)$ is a \emph{regular} $G$-\emph{space} if the previous measure class contains a probability measure $\mu$ such that the isometric action $R \colon G \curvearrowright \upL^1(S, \mu)$ defined by
$$
(R(g).f)(s) =  f(g^{-1}.s) \frac{d g^{-1}\mu}{d\mu}(s)
$$
is continuous. Here $d g^{-1}\mu \slash d\mu$ denotes the Radon-Nikod\'{y}m derivative.
\end{deft}

\begin{es}\label{es:reg:spaces}
Let $G$ be locally compact second countable group, then the following are examples of regular $G$-spaces~\cite[Example~2.1.2]{monod:libro}:
\begin{enumerate}
\item If $G$ is endowed with its Haar measure, then $G$ is a regular $G$-space. 

\item If $Q$ is a closed subgroup of $G$, then $G \slash Q$ endowed with the natural almost invariant measure is a regular $G$-space.

\item Furstenberg-Poisson boundaries~\cite{furst:articolo73,furst:articolo} associated to a probability measure on $G$ are regular $G$-spaces.
\end{enumerate}
\end{es}

The notion of regular $G$-spaces allows us to introduce the definitions of \emph{amenable actions} and \emph{amenable spaces}~\cite[Theorem~5.3.2]{monod:libro}.

\begin{deft}\label{def:amenable:action}
Let $G$ be a locally compact second countable group and let $(S,\mu)$ be a regular $G$-space. We say that the action of $G$ on $(S,\mu)$ is \emph{amenable} if  there exists a continuous norm-one $G$-equivariant linear operator
$$
p \colon \upL^\infty(G \times S;\bbR) \rightarrow \upL^\infty(S;\bbR) \ , 
$$
with the following two properties: First $p(\chi_{G \times S})=\chi_S$, secondly for all $f \in \, \upL^\infty(G \times S)$ and for all measurable sets $A \subset S$ we have $p(f \cdot \chi_{G \times A}) = p(f) \cdot \chi_A$.

If the action by $G$ on $(S, \mu)$ is amenable, then we say that $(S,\mu)$ is an \emph{amenable $G$-space}.
\end{deft}

\begin{oss}\label{oss:am:spaces}
The previous definition extends the notion of amenable groups in the following sense: A group is amenable if and only if every regular $G$-space is an amenable $G$-space~\cite[Theorem~5.3.9]{monod:libro}.

Amenable actions not only characterize groups but also subgroups. By Example~\ref{es:reg:spaces}.2, given a closed subgroup $Q \subset G$, the quotient $G \slash Q$ is a regular $G$-space. Additionally, we have that $Q$ is amenable if and only if the $G$-action on $G \slash Q$ is amenable~\cite[Proposition 4.3.2]{zimmer:libro}. Hence, Example~\ref{es:amenable:groups}.4 shows that if $G$ is a Lie group and $P \subset G$ is any minimal parabolic subgroup, then the $G$-action on the quotient $G \curvearrowright G \slash P$ is amenable. This applies to the Furstenberg-Poisson boundary of a Lie group (being identified with $G/P$). 
\end{oss}

We recall now the notion of measurable cocycles and some of their properties.

\begin{deft}\label{def:zimmer:cocycle}
Let $G$ and $H$ be locally compact groups and let $(X,\mu)$ be a standard Borel probability $G$-space. A \emph{measurable cocycle} (or, simply \emph{cocycle}) is a measurable map $\sigma \colon G \times X \rightarrow H$ satisfying the following formula
\begin{equation}\label{eq:zimmer:cocycle}
\sigma(g_1g_2,x)=\sigma(g_1,g_2.x)\sigma(g_2,x) \ ,
\end{equation}
for almost every $g_1,g_2 \in G$ and almost every $x \in X$. Here, $g_2.x$ denotes the action by $g_2 \in G$ on $x \in \, X$. 
\end{deft}

Associated to measurable cocycles there exists the crucial notion of \emph{boundary map}.

\begin{deft}
Let $G$ and $H$ be two locally compact groups and let $Q \leq G$ be a closed amenable subgroup. Let $(X,\mu)$ be a standard Borel probability $G$-space and let $(Y,\nu)$ be a measure space on which $H$ acts by preserving the measure class of $\nu$. Given a measurable cocycle $\sigma \colon G \times X \rightarrow H$, we say that a measurable map $\phi \colon G/Q \times X \rightarrow Y$ is \emph{$\sigma$-equivariant} if we have
$$
\phi(g.\eta,g.x)=\sigma(g,x)\phi(\eta,x) \ ,
$$
for almost every $g \in G,\eta \in G/Q$ and $x \in X$. 

A \emph{(generalized) boundary map} associated to $\sigma$ is a $\sigma$-equivariant measurable map. 
\end{deft}

We will make use of generalized boundary maps in Section~\ref{subsec:pullback:boundary}, when we will explain how to compute the pullback in continuous bounded cohomology.

\begin{oss}\label{oss:esistenza:mappe:bordo}
It is quite natural to ask when a (generalized) boundary map actually exists. Let $G(n)=\textup{Isom}(\bbH^n_{K})$ be the isometry group of the $K$-hyperbolic space, where $K$ is either $\bbR$ or $\bbC$. Given a lattice $\Gamma \leq G(n)$, let us consider a standard Borel probability $\Gamma$-space $(X,\mu_X)$ and a measurable cocycle $\sigma:\Gamma \times X \rightarrow G(m)$. In the previous situation, Monod and Shalom \cite[Proposition 3.3]{MonShal0} proved that if the cocycle $\sigma$ is \emph{non elementary} then there exists an essentially unique boundary map 
$$
\phi:\partial_\infty \bbH^n_{K} \times X \rightarrow \partial_\infty \bbH^m_{K} \ .
$$
The notion of non-elementary cocycle relies on the definition of \emph{algebraic hull} (Definition \ref{def:alg:hull}) and it will be explained more carefully later in this paper. 

Also in the case of higher rank lattices there are some relevants results about the existence of boundary maps. Indeed a key step in the proof of Zimmer' Superrigidity Theorem \cite[Theorem 4.1]{zimmer:annals} is to prove the existence of generalized boundary maps for \emph{Zariski dense} measurable cocycles (see Definition \ref{def:alg:hull}). 
\end{oss}

Since Equation~(\ref{eq:zimmer:cocycle}) suggests that $\sigma$ can be interpreted as a Borel $1$-cocycle in $\textup{Meas}(G,\textup{Meas}(X,H))$~\cite{feldman:moore}, it is natural to introduce the definition of \emph{cohomologous cocycles}. 

\begin{deft}\label{def:cohomology:cocycle}
Let $\sigma \colon G \times X \rightarrow H$ be a measurable cocycle between locally compact groups. Let $f \colon X \rightarrow H$ be a measurable map. We define the \emph{twisted cocycle associated to $\sigma$ and $f$} as
$$
f.\sigma \colon G \times X \rightarrow H, \hspace{5pt} (f.\sigma)(g,x) \coloneqq f(g.x)^{-1}\sigma(g,x)f(x) \ ,
$$ 
for almost every $g \in G$ and almost every $x \in X$. 

We say that two measurable cocycles $\sigma_1,\sigma_2\colon G \times X \rightarrow H$ are \emph{cohomologous} if there exists a measurable function $f \colon X \rightarrow H$ such that
$$
\sigma_2=f.\sigma_1 \ .
$$
Similarly, we say that $\sigma_1$ and $\sigma_2$ are \emph{cohomologous modulo a closed subgroup $C \leq H$} if
$$
\sigma_2=f.\sigma_1 \ \ \mod C \ ,
$$
that is
$$
\sigma_2(g,x) \cdot (f.\sigma_1(g,x))^{-1} \in C \ ,
$$
for almost every $g \in G, x \in X$. 
\end{deft}

When a measurable cocycle $\sigma$ admits a generalized boundary map, then all its cohomologous cocycles share the same property. 

\begin{deft}\label{def:twisted:map}
Let $\sigma \colon G \times X \rightarrow H$ be a measurable cocycle with generalized boundary map $\phi \colon G/Q \times X \rightarrow Y$. Given a measurable function $f \colon X \rightarrow H$ the \emph{twisted boundary map associated to $f$ and $\phi$} is defined as
$$
f.\phi \colon G/Q \times X \rightarrow Y, \hspace{5pt} (f.\phi)(\eta,x) \coloneqq f(x)^{-1}\phi(\eta,x) \ ,
$$
for almost every $g \in G,\eta \in G/Q$ and $x \in X$. 
\end{deft}
\begin{oss}\label{oss:twisted:boundary:map}
Let $\sigma, \sigma' \colon G \times X \rightarrow H$ be two cohomologous cocycles and let $f \colon X \rightarrow H$ be the measurable map such that $\sigma' = f.\sigma$. If $\sigma$ admits a generalized boundary map $\phi$, then the twisted boundary map associated to $f$ and $\phi$ is a generalized boundary map associated to $\sigma'$.
\end{oss}

Representations provide special cases of measurable cocycles: 

\begin{deft}\label{def:rep:cocycle}
Let $\rho \colon G \rightarrow H$ be a continuous representation and let $(X,\mu)$ be a standard Borel probability $G$-space. The \emph{cocycle associated to the representation $\rho$} is defined as
$$
\sigma_\rho \colon G \times X \rightarrow H, \hspace{5pt} \sigma_\rho(g,x)=\rho(g) \ , 
$$
for every $g \in G$ and almost every $x \in X$. 
\end{deft}

Given a representation $\rho \colon G \rightarrow H$, one can obtain useful information about $\rho$ by studying the closure of the image $\overline{\rho(\Gamma)}$ as subgroup of $H$. 
On the other hand, in general the image of a measurable cocycle does not have any nice algebraic structure. 
Nevertheless, when $H$ is assumed to be an algebraic group, we can give the following

\begin{deft}\label{def:alg:hull}
Let $\mathbf{H}$ be a real algebraic group and denote by $H=\mathbf{H}(\bbR)$ the set of real points of $\mathbf{H}$. The \emph{algebraic hull} of a measurable cocycle $\sigma\colon G \times X \rightarrow H$ is the (conjugacy class of the) smallest algebraic subgroup $\mathbf{L} \leq \mathbf{H}$ such that $\mathbf{L}(\bbR)^\circ$ contains the image of a cocycle cohomologous to $\sigma$. Here $\mathbf{L}(\bbR)^\circ$ denotes the connected component of the neutral element.
\end{deft}

\begin{oss}
The algebraic hull is well-defined by the Noetherian property of algebraic groups. Moreover, it only depends on the cohomology class of the cocycle $\sigma$~\cite[Proposition 9.2.1]{zimmer:libro}. 
\end{oss}
We will use the previous definition when we will work with~\emph{totally real cocycles} (Section~\ref{sec:tot:real}) and when we will investigate the properties of cocycles with non-vanishing pullback (Theorem~\ref{teor:alg:hull}). 

\subsection{Bounded cohomology and its functorial approach} \label{subsec:burger:monod}

In this section we are going to recall the definitions and the properties of both continuous and continuous bounded cohomology that we will need in the sequel.

We first introduce continuous (bounded) cohomology via the homogeneous resolution and then, following the work by Burger and Monod \cite{monod:libro,burger2:articolo}, we describe it in terms of strong resolutions by relatively injective modules.

\begin{deft}
Let $G$ be a locally compact  group. A \emph{Banach} $G$-\emph{module} $(E, \pi)$ is a Banach space $E$ endowed with a $G$-action induced by a representation $\pi \colon G \to \textup{Isom}(E)$, or equivalently a $G$-action via linear isometries:
\begin{align*}
\theta_\pi \colon G \times E &\to E \\
\theta_\pi (g, v) &\coloneqq \pi (g) v \ .
\end{align*}
We say that a Banach $G$-module $(E, \pi)$ is \emph{continuous} if the map $\theta(\cdot, v)$ is continuous for all $v \in \, E$. 
Finally, we denote by $E^G$ the submodule of $G$-invariant vectors in $E$, i.e. the space of vectors $v$ such that $\theta(g, v) = v$ for all $g \in \, G$.
\end{deft}
\begin{notation}
In the sequel $\mathbb{R}$ will denote the Banach $G$-module of trivial real coefficients. In other words, it is endowed with the trivial $G$-action: $\pi(g) v = v$ for all $v \in \, \mathbb{R}$ and $g \in \, G$.
\end{notation}

\begin{es}
Let $(E, \pi)$ be a Banach $G$-module. Then, the space of \emph{continuous} $E$-\emph{valued functions}
$$
\textup{C}_{c}^\bullet(G; E) \coloneqq \{f \colon G^{\bullet+1} \rightarrow E \, | \,  \mbox{$f$ is continuous} \} 
$$
is a continuous Banach $G$-module with the following action
\begin{equation}\label{eq:azione:funzioni}
g.f (h_1, \cdots, h_{\bullet +1}) = \pi(g) f(g^{-1} h_1, \cdots, g^{-1} h_{\bullet+1}) \ ,
\end{equation}
for all $g, h_1, \cdots, h_{\bullet+1} \in \, G$.
\end{es}

The spaces of continuous $E$-valued functions give raise to a cochain complex $(\textup{C}_{c}^\bullet(G; E), \delta^\bullet)$ together with the standard homogeneous coboundary operator
$$
\delta^\bullet \colon \textup{C}_{c}^\bullet(G; E) \rightarrow \textup{C}_{c}^{\bullet + 1}(G; E)
$$
$$
\delta^\bullet (f) (g_1, \ldots, g_{\bullet + 2}) \coloneqq \sum_{j = 1}^{\bullet + 2} (-1)^{j - 1} f(g_1, \ldots, g_{j-1}, g_{j+1} \ldots, g_{\bullet + 2}) \ .
$$
Since this complex is exact, we are going to focus our attention to the subcomplex of $G$-invariant vectors $(\upC_c^\bullet(G; E)^G, \delta^\bullet)$.

\begin{deft}
The \emph{continuous cohomology} of $G$ with coefficients in $E$, denoted by $\upH_{c}^\bullet(G; E)$, is the cohomology of the complex $(\upC_{c}^\bullet(G; E)^G, \delta^\bullet)$.
\end{deft}
\begin{oss}\label{oss:discr:group:cohom:same:G}
If $G$ is a discrete group, then there is no difference between continuous and ordinary cohomology. 
Hence, in this situation we will usually drop the subscript $c$ from the notation.
\end{oss}

Since $(E, \lVert \cdot \rVert_E)$ is a Banach space, the Banach $G$-module $\upC_c^\bullet(G; E)$ has a natural $\upL^\infty$-norm: For every $f \in \, \upC_c^\bullet(G; E)$, we have
$$
\lVert f \rVert_\infty \coloneqq \sup \{ \lVert f(g_1, \ldots, g_{\bullet + 1} )\rVert_E \, | \, g_1, \ldots, g_{\bullet +1} \in \, G    \} \ .
$$
A continuous function is said to be  \emph{bounded} if its $\upL^\infty$-norm is finite. Let $\upC_{cb}^\bullet(G; E) \subset \upC_c^\bullet(G; E)$ be the subspace of continuous bounded functions. 
By linearity the coboundary operator $\delta^\bullet$ preserves boundedness, so we can restrict $\delta^\bullet$ to the space of continuous bounded $G$-invariant functions $\upC_{cb}^\bullet(G;E)^G$. Then we get the following complex
$$
(\upC_{cb}^\bullet(G; E)^G, \delta^\bullet) \ .
$$ 

\begin{deft}
The \emph{continuous bounded cohomology} of $G$ with coefficients in $E$, denoted by $\upH_{cb}^\bullet(G; E)$,  is the cohomology of the complex $(\upC_{cb}^\bullet(G; E)^G, \delta^\bullet)$.
\end{deft}
\begin{oss}
If $L \subset G$ is a closed subgroup, then we can compute the continuous bounded cohomology of $L$ with $E$-coefficients as the cohomology of the complex $(\upC_{cb}^\bullet(G;E)^L, \delta^\bullet)$. Here the $L$-action is the restriction of the natural $G$-action on $\upC_{cb}^\bullet(G;E)$~\cite[Corollary~7.4.10]{monod:libro}.
\end{oss}

The $\upL^\infty$-norm defined on cochains induces a canonical $\upL^\infty$-seminorm in cohomology given by
$$
\lVert f \rVert_\infty  \coloneqq \inf \{ \lVert \psi \rVert_\infty \ | \ [\psi]=f \}  \ . 
$$
We say that an isomorphism between seminormed cohomology groups is \emph{isometric} if  the corresponding seminorms are preserved.

Beyond the difference determined by the quotient seminorm, one can study the gap between continuous cohomology and continuous bounded cohomology via the map induced in cohomology  by the inclusion
$
i \colon \upC_{cb}^\bullet(G; E)^G \rightarrow \upC^\bullet_{c}(G; E)^G$.
The resulting map $$\comp_G^\bullet \colon \upH_{cb}^\bullet(G; E) \rightarrow \upH_c^\bullet(G; E) $$ is called \emph{comparison map}.

In the sequel we will need an alternative description of continuous bounded cohomology in terms of strong resolutions via relatively injective modules. Since we will not make an explicit use of these notions, we refer the reader to Monod's book for a broad discussion on them~\cite[Section~4.1 and~7.1]{monod:libro}. The main result in this direction is the following
\begin{teor}[{\cite[Theorem~7.2.1]{monod:libro}}]\label{thm:Monod:strong:rel:inj}
Let $G$ be a locally compact  group and let $(E, \pi)$ be a Banach $G$-module. Then, for every strong resolution $(E^\bullet, \delta^\bullet)$ of $E$ via relatively injective $G$-modules, the cohomology of the complex of $G$-invariants $\upH^n((E^\bullet)^G, \delta^\bullet))$ is isomorphic as a topological vector space to the continuous bounded cohomology $\upH^n_{cb}(G; E)$, for every $n \geq 0$.
\end{teor}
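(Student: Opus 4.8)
The plan is to deduce the statement from the relative version of the fundamental lemma of homological algebra, exactly as one proves uniqueness of derived functors. The only genuine input beyond formal manipulation is that the \emph{standard} complex computing $\upH^\bullet_{cb}(G;E)$ is itself an admissible resolution in the sense of the statement; that is, that the homogeneous resolution $0 \to E \to \upC_{cb}^\bullet(G;E)$ is a strong resolution all of whose terms are relatively injective $G$-modules. I would take this as established from the development preceding the theorem: strongness is witnessed by the standard contracting homotopy $h^n \colon \upC_{cb}^{n}(G;E) \to \upC_{cb}^{n-1}(G;E)$ obtained by inserting the identity $e \in G$ as an argument of a cochain, which has norm at most one and satisfies the contracting-homotopy identity $h^{n+1}\delta^n + \delta^{n-1}h^n = \id$ on the augmented complex, while relative injectivity of each $\upC_{cb}^n(G;E)$ is established directly from the extension property of bounded functions on $G$.

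The heart of the argument is a comparison lemma: if $0 \to A \to A^\bullet$ is a \emph{strong} resolution and $0 \to U \to U^\bullet$ is a resolution by \emph{relatively injective} $G$-modules, then every $G$-morphism $A \to U$ extends to a $G$-morphism of complexes $A^\bullet \to U^\bullet$ compatible with the augmentations, and any two such extensions are $G$-homotopic. I would prove this by induction on the degree. For existence, at stage $n$ one uses the norm-$\le 1$ splitting coming from the contracting homotopy of the strong source resolution to exhibit the relevant inclusion as an admissible monomorphism, and then invokes relative injectivity of $U^n$ to extend the partially built chain map across that monomorphism; the norm control in the definition of relative injectivity keeps every map bounded. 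For uniqueness, given two extensions one feeds their difference into the same inductive scheme to manufacture the homotopy operators degree by degree, again alternating the contracting homotopy of the source with relative injectivity of the target.

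With the lemma in hand I apply it in both directions between the given resolution $E^\bullet$ and the standard one $\upC_{cb}^\bullet(G;E)$, both of which are simultaneously strong and relatively injective. Extending $\id_E$ yields $G$-morphisms of complexes $\alpha^\bullet \colon E^\bullet \to \upC_{cb}^\bullet(G;E)$ and $\beta^\bullet \colon \upC_{cb}^\bullet(G;E) \to E^\bullet$. The composites $\beta^\bullet \alpha^\bullet$ and $\alpha^\bullet \beta^\bullet$ both extend $\id_E$, as do the respective identity chain maps, so the uniqueness part of the lemma makes them $G$-homotopic to the identity. Restricting to the subcomplexes of $G$-invariants and passing to cohomology, $G$-homotopic maps induce equal maps, so $\alpha^\bullet$ and $\beta^\bullet$ descend to mutually inverse homomorphisms $\upH^n((E^\bullet)^G,\delta^\bullet) \cong \upH^n_{cb}(G;E)$. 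Because all the chain maps are bounded, the induced maps on cohomology are continuous for the quotient topologies, and being mutually inverse they furnish an isomorphism of topological vector spaces, as claimed.

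The main obstacle is the comparison lemma, and within it the existence step: one must keep the inductively constructed chain maps simultaneously $G$-equivariant, bounded, and compatible with the augmentation, and this only works because relative injectivity is defined with respect to \emph{admissible} monomorphisms (those split by a norm-$\le 1$ linear, not necessarily equivariant, map) and the strongness of the source resolution is precisely what produces such splittings at every stage. Getting this bookkeeping right, matching the contracting homotopy of the source to the splitting data demanded by relative injectivity of the target, is the only nontrivial point; everything else is formal.
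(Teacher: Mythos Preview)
The paper does not prove this theorem; it is quoted from Monod's book with only the citation \cite[Theorem~7.2.1]{monod:libro} as justification, so there is no in-paper proof to compare against. Your proposal is the standard argument that Monod gives in that reference: the homogeneous resolution $\upC_{cb}^\bullet(G;E)$ is itself strong and by relatively injective modules, and the comparison lemma (extension of $\id_E$ to a $G$-chain map, unique up to $G$-homotopy, built by alternating the contracting homotopy of the source with relative injectivity of the target) then yields mutually inverse bounded chain maps between any two such resolutions. Your sketch is correct and matches the source the paper cites.
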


We now describe a strong resolution via relatively injective modules which allows us to compute bounded cohomology \emph{isometrically}. Let $G$ be a locally compact second countable group. Let $(E, \pi, \lVert \cdot \rVert_E)$ be a Banach $G$-module such that $E$ is the dual of some Banach space. This implies that $E$ can be endowed with the weak-$^*$ topology and the associated weak-$^*$ Borel structure. Moreover, let $(S, \mu)$ be a regular $G$-space. We have the following

\begin{deft}
We define the Banach $G$-module of \emph{bounded weak}-$^*$ \emph{measurable} $E$-\emph{valued functions on} $S$ to be the Banach space
\begin{align*}
\mathcal{B}^\infty(S^{\bullet+1};E) \coloneqq \{ f \colon S^{\bullet+1} \rightarrow E \, | \,  &\textup{$f$ is weak-$^*$ measurable}, \\
\sup_{s_1,\cdots,s_{\bullet+1} \in S} \lVert &f(s_1,\cdots,s_{\bullet+1}) \rVert_E < \infty \} \ 
\end{align*}
endowed with the following $G$-action $\tau$:
$$
(\tau(g) f) (s_1, \cdots, s_{\bullet+1}) \coloneqq \pi(g) f(g^{-1}s_1, \cdots, g^{-1}s_{\bullet+1})
$$
for every $g \in \, G$, $s_1, \cdots, s_{\bullet+1} \in \, S$ and $f \in \, \mathcal{B}^\infty(S^{\bullet+1};E)$.

We define the Banach $G$-module of \emph{essentially bounded weak-$^*$ measurable $E$-valued functions} on $S$ to be
$$
\textup{L}^\infty_{\textup{w}^*}(S^{\bullet+1};E) \coloneqq \{ [f]_\sim \ | \ f \in \, \mathcal{B}^\infty(S^{\bullet+1};E)\} \ ,
$$
where $f \sim g$ if and only if they agree $\mu$-almost everywhere and $[f]_\sim$ denotes the equivalence class of $f$ with respect to $\sim$. 
\end{deft}
\begin{oss}
For ease of notation we will denote elements in $\textup{L}^\infty_{\textup{w}^*}(S^{\bullet+1};E)$ simply by one chosen representative $f$.
\end{oss}
\begin{deft}\label{def:alternating}
Let us consider the situation above. We say that a(n essentially) bounded weak-~$^*$ measurable function $f \colon S^{\bullet + 1} \rightarrow E$ is \emph{alternating} if for every $s_1, \cdots, s_{\bullet+1} \in \, S$ we have
$$
\sign(\varepsilon) f(s_1, \ldots, s_{\bullet + 1}) =  f(s_{\varepsilon(1)}, \ldots, s_{\varepsilon({\bullet + 1})}) \ ,
$$
where $\varepsilon \in \mathfrak{S}_{\bullet+1}$ is a permutation whose sign is $\sign(\varepsilon)$. 
\end{deft}
Since the standard homogeneous operator $\delta^\bullet$ preserves $G$-invariant (alternating) essentially bounded weak-${}^*$ measurable functions up to a shift of the degree, we can consider the complex $(\textup{L}^\infty_{\textup{w}^*}(S^{\bullet+1};E), \delta^\bullet)$. The following theorem shows when the previous complex computes isometrically the continuous bounded cohomology of $G$ with $E$-coefficients.
\begin{teor}\cite[Theorem 7.5.3]{monod:libro}\label{teor:monod:2:rel:inj:strong}
Let $G$ be a locally compact second countable group. Let $(E, \pi)$ be a dual Banach $G$-module. Let $(S, \mu)$ be an \emph{amenable} regular $G$-space. Then, the cohomology of the complex $(\textup{L}^\infty_{\textup{w}^*}(S^{\bullet+1};E)^G, \delta^\bullet)$ is \emph{isometrically} isomorphic to $\upH^n_{cb}(G; E)$, for every integer $n \geq 0$.

The same result still holds if we restrict to the subcomplex of alternating essentially bounded weak-${}^*$ measurable functions on $S$.
\end{teor}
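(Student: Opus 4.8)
The plan is to realize the augmented complex
$$
0 \to E \hookrightarrow \textup{L}^\infty_{\textup{w}^*}(S^{1};E) \xrightarrow{\delta^0} \textup{L}^\infty_{\textup{w}^*}(S^{2};E) \xrightarrow{\delta^1} \cdots,
$$
where $E$ includes as the constant functions, as a \emph{strong resolution of $E$ by relatively injective Banach $G$-modules}, and then to invoke Theorem~\ref{thm:Monod:strong:rel:inj} to identify the cohomology of the subcomplex of $G$-invariants with $\upH^\bullet_{cb}(G;E)$. That yields the isomorphism as topological vector spaces for free; the remaining work is to upgrade it to an \emph{isometry} and to transfer it to the alternating subcomplex.

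First I would check that the augmented complex is \emph{strong}, i.e. that it admits a contracting homotopy by bounded (not necessarily $G$-equivariant) operators of norm at most one. Since $\mu$ is a probability measure and $E$ is a dual Banach space, I define
$$
h^n \colon \textup{L}^\infty_{\textup{w}^*}(S^{n+1};E) \to \textup{L}^\infty_{\textup{w}^*}(S^{n};E), \qquad (h^n f)(s_1,\dots,s_n) \coloneqq \int_S f(s, s_1, \dots, s_n)\, d\mu(s),
$$
the integral taken in the weak-${}^*$ sense. A direct computation gives $\delta^{n-1} h^n + h^{n+1}\delta^n = \id$, while $\mu(S)=1$ forces $\lVert h^n \rVert \le 1$. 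This simultaneously establishes exactness of the augmented complex and its strongness.

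The core step is the \emph{relative injectivity} of each module $\textup{L}^\infty_{\textup{w}^*}(S^{n+1};E)$, and this is precisely where the amenability of $(S,\mu)$ is consumed. Given an admissible injection $\iota\colon A \to B$ of continuous Banach $G$-modules carrying a norm-one linear splitting, and a $G$-morphism $\alpha \colon A \to \textup{L}^\infty_{\textup{w}^*}(S^{n+1};E)$, I would first extend $\alpha$ through the splitting to a bounded, a priori non-equivariant, map $B \to \textup{L}^\infty_{\textup{w}^*}(S^{n+1};E)$ of the same norm, and then average it into a genuine $G$-morphism by feeding it through (the $E$-valued incarnation of) the norm-one $G$-equivariant operator $p$ of Definition~\ref{def:amenable:action}. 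The relations $p(\chi_{G\times S})=\chi_S$ together with the $G$-equivariance of $p$ guarantee that the averaged extension is equivariant, restricts to $\alpha$ along $\iota$, and has norm $\le \lVert\alpha\rVert$. \textbf{This verification is the main obstacle}: it is the unique place where the amenability hypothesis enters, and it concentrates the functional-analytic subtleties, namely controlling weak-${}^*$ measurability and deploying the dual module structure and the Radon--Nikod\'ym factors of the quasi-invariant measure class when transporting $p$ to $E$-valued functions.

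Finally I would promote the topological isomorphism to an isometry. The standard continuous bounded homogeneous resolution $(\upC_{cb}^\bullet(G;E),\delta^\bullet)$ realizes the canonical seminorm on $\upH^\bullet_{cb}(G;E)$ by definition and is itself strong and relatively injective; since the $S$-resolution now shares these properties, relative injectivity allows me to extend $\id_E$ to $G$-morphisms of resolutions in \emph{both} directions with norm at most one, and the fundamental lemma of homological algebra makes them mutually $G$-homotopy inverse. The induced cohomology isomorphism is therefore norm-non-increasing both ways, hence isometric. For the alternating subcomplex I would observe that the signed averaging operator over $\mathfrak{S}_{\bullet+1}$ is a norm-non-increasing $G$-chain map, $G$-homotopic to the identity, whose image is exactly the alternating functions; this exhibits the inclusion of the alternating subcomplex as an isometric homotopy equivalence and carries the isometric computation over to it.
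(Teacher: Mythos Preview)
The paper does not supply its own proof of this statement: it is quoted verbatim as \cite[Theorem~7.5.3]{monod:libro} and used as a black box, so there is no in-paper argument to compare against. Your outline is a faithful sketch of Monod's actual proof in \cite{monod:libro}: strongness via the integration homotopy, relative injectivity of $\textup{L}^\infty_{\textup{w}^*}(S^{\bullet+1};E)$ extracted from the amenability operator $p$, and then the isometric upgrade by producing norm-one $G$-morphisms of resolutions in both directions. The only place I would flag is your ``$E$-valued incarnation of $p$'' step: in Monod's treatment the passage from the scalar conditional-expectation-type operator of Definition~\ref{def:amenable:action} to one acting on $\textup{L}^\infty_{\textup{w}^*}(S;E)$ for a general coefficient dual module is not a one-liner, and is handled through the machinery of coefficient modules and the functorial properties of $\textup{L}^\infty_{\textup{w}^*}$ rather than by a direct tensoring of $p$; your sketch glosses over this, but correctly identifies it as the technical crux.
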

\begin{oss}\label{oss:L:resol:uguale:G}
In the situation of the previous theorem if $L \subset G$ is a closed subgroup, then also the cohomology of the complex $(\textup{L}^\infty_{\textup{w}^*}(S^{\bullet+1};E)^L, \delta^\bullet)$ is \emph{isometrically} isomorphic to $\upH^n_{cb}(L; E)$, for every $n \geq 0$~\cite[Lemma~4.5.3]{monod:libro}.
\end{oss}
\begin{es}\label{es:L:inf:G:Q}
 Let $G$ be a locally compact second countable group and let $Q \subset G$ be a closed amenable subgroup. By Remark~\ref{oss:am:spaces} and Example~\ref{es:reg:spaces}.2 we know that $G \slash Q$ is an amenable regular $G$-space. Thus for every Banach $G$-module $(E, \pi)$ the cohomology of the complex $(\textup{L}^\infty_{\textup{w}^*}((G \slash Q)^{\bullet+1};E)^G, \delta^\bullet)$ isometrically computes the continuous bounded cohomology of $G$ with coefficient in $E$. An instance of this situation is when $Q$ is a minimal parabolic subgroup of a semisimple Lie group $G$. 
\end{es}

As we have just discussed one can compute continuous bounded cohomology by working with equivalence classes of bounded weak-${}^*$ measurable functions. However, in some cases it might be convenient to work directly with $\mathcal{B}^\infty(S^{\bullet + 1}; E)$. 
Also in this case the  homogeneous coboundary operator sends (alternating) bounded weak-$^*$ measurable functions to themselves up to shifting the degree. Hence, we can still construct a complex 
$(\mathcal{B}^\infty(S^{\bullet + 1}; E), \delta^\bullet)$. Unfortunately, the associated resolution of $E$ is only strong in general~\cite[Proposition~2.1]{burger:articolo}.
So it cannot be used to compute the continuous bounded cohomology of $G$ with $E$-coefficients.
Nevertheless, one obtain the following canonical map~\cite[Corollary~2.2]{burger:articolo} 
\begin{equation}\label{eq:canonical:map:B:L}
\mathfrak{c}^n \colon \upH^{n}(\mathcal{B}^\infty(S^{\bullet + 1}; E)^G) \rightarrow\upH^{n}(\upL_{\text{w}^*}^\infty(S^{\bullet + 1}; E)^G) \cong \upH_{cb}^n(G; E) \ ,
\end{equation}
for every $n \in \, \mathbb{N}$. This shows that each  bounded weak-${}^*$ measurable $G$-invariant function canonically determines a cohomology class in $\upH_{cb}^n(G; E)$. The same result still holds in the situation of alternating functions.

In Section~\ref{subsec:pullback:boundary}, we will tacitly use this result for showing that the pullback of a bounded weak-${}^*$ measurable $G$-invariant function lies in fact in $\upL^\infty_{\text{w}^*}$.

\subsection{Transfer maps}\label{subsec:transfer:maps}

In this section we briefly recall the notion of \emph{transfer maps}~\cite{monod:libro}. Let $G$ be a locally compact second countable group and let $i \colon L \rightarrow G$ be the inclusion of a closed subgroup $L$ into $G$. By functoriality the inclusion induces a pullback in continuous bounded cohomology
$$
\upH^\bullet_{cb}(i) \colon \upH_{cb}^\bullet(G; \mathbb{R}) \rightarrow \upH_{cb}^\bullet(L; \mathbb{R}) \ .
$$

\begin{oss}\label{oss:notation:restriction}
Since the map $\upH^\bullet_{cb}(i)$ is implemented by the restriction to $L$ of cochains on $G$, we will sometimes write $\kappa|_L$ instead of $\upH^\bullet_{cb}(i)(\kappa)$, for $\kappa \in \upH^\bullet_{cb}(G;\bbR)$.  
\end{oss}

A transfer map provides a cohomological left inverse to $\upH^\bullet_{cb}(i) $. Assume that $L \backslash G$ admits a $G$-invariant probability measure $\mu$ (e.g. when $L$ is a lattice of $G$), then we have

\begin{deft}\label{def:trans:map}
We define the \emph{transfer cochain map} as
$$
\widehat{\textup{trans}}^\bullet_L \colon \upC_{cb}^\bullet(G;\bbR)^L \rightarrow \upC_{cb}^\bullet(G;\bbR)^G
$$
$$
\widehat{\textup{trans}}^\bullet_L(\psi)(g_1, \ldots, g_{\bullet+1}) \coloneqq \int_{L \backslash G} \psi(\overline{g}.g_1, \ldots, \overline{g}.g_{\bullet+1}) d\mu(\overline{g}) \ ,
$$
for every $(g_1, \ldots, g_{\bullet+1}) \in \, G^{\bullet+1}$ and $\psi \in \, \upC_{cb}^\bullet(G;\bbR)^L$. Here $\overline{g}$ denotes the equivalence class of $g$ in the quotient $L \backslash G$. 

The \emph{transfer map} $\trans_L^\bullet$ is the one induced in cohomology by $\widehat{\textup{trans}}^\bullet_L$:
$$
\trans_L^\bullet \colon \upH_{cb}^\bullet(L;\bbR) \rightarrow \upH_{cb}^\bullet(G;\bbR) \ .
$$
\end{deft} 

\begin{oss}
The transfer map is well-defined since we can compute the continuous bounded cohomology of $L$ by looking at the complex $(\upC_{cb}^\bullet(G;\bbR)^L, \delta^\bullet)$ (Remark~\ref{oss:discr:group:cohom:same:G}).

Moreover, since $\psi$ is $L$-invariant, it induces a well-defined function on the quotient $L \backslash G$. 
With a slight abuse of notation, in the previous formula we still denoted by $\psi$ this induced function.
\end{oss}

We give now an alternative definition of the transfer map for essentially bounded weak-${}^*$ measurable functions. 
Let $Q$ and $L$ be closed subgroups of a locally compact second countable group $G$. If $Q$ is amenable, 
then the subcomplex of $L$-invariant essentially bounded functions on $G/Q$ computes the continuous bounded cohomology $\upH^\bullet_{cb}(L;\bbR)$
(Remark~\ref{oss:L:resol:uguale:G} and Example~\ref{es:L:inf:G:Q}). Hence, the new \emph{transfer map}
$$\trans_{G \slash Q}^\bullet \colon \upH^\bullet_{cb}(L; \mathbb{R}) \rightarrow \upH^\bullet_{cb}(G; \mathbb{R})$$ 
is the map induced in cohomology by the following
$$
\widehat{\textup{trans}}^\bullet_{G \slash Q} \colon \upL^\infty((G \slash Q)^{\bullet+1};\bbR)^L \rightarrow  \upL^\infty((G \slash Q)^{\bullet+1};\bbR)^G
$$
$$
\widehat{\textup{trans}}^\bullet_{G \slash Q}(\psi)(\xi_1, \ldots, \xi_{\bullet+1}) \coloneqq \int_{L \backslash G} \psi(\overline{g}.\xi_1, \ldots, \overline{g}.\xi_{\bullet+1}) d \mu(\overline{g}) \ ,
$$
for almost all $(\xi_1, \ldots, \xi_{\bullet+1}) \in \, (G \slash Q)^{\bullet + 1}$ and $\psi \in \, \upL^\infty((G \slash Q)^{\bullet+1};\bbR)^L$.

The following commutative diagram completely describes the relation between the two transfer maps $\trans^\bullet_L$ and $\trans^\bullet_{G \slash Q}$~\cite[Lemma 2.43]{BIuseful} 
\begin{equation}\label{eq:diagramma:due:transf}
\xymatrix{
\upH^\bullet_{cb}(L; \mathbb{R}) \ar[rr]^-{\trans_L^\bullet} \ar[d]_-\cong && \upH^\bullet_{cb}(G; \mathbb{R}) \ar[d]^-{\cong} \\
\upH^\bullet_{cb}(L; \mathbb{R}) \ar[rr]_-{\trans^\bullet_{G \slash Q}} && \upH^\bullet_{cb}(G; \mathbb{R})  \ .
}
\end{equation}
Here the vertical arrows are the canonical isomorphisms obtained by extending the identity $\mathbb{R} \rightarrow \mathbb{R}$ to the complex of continuous bounded and essentially bounded functions, respectively.

\section{Pullback maps, multiplicative constants and maximal measurable cocycles}\label{sec:easy:formula}

The main goal of this section is to define pullbacks in continuous bounded cohomology via measurable cocycles and generalized boundary maps. As an application we extend Burger and Iozzi's \emph{useful formula} for representations~\cite[Proposition~2.44]{BIuseful} to the wider setting of measurable cocycles. This allows us to introduce the notion of multiplicative constants and investigate cocycles rigidity.

\begin{setup}\label{setup:mult:const}
Let us consider the following setting:
\begin{itemize}
	\item Let $G$ be a second countable locally compact group. 
	\item Let $G'$ be a locally compact group which acts measurably on a measure space $(Y, \nu)$ by preserving the measure class.
	\item Let $Q$ be a closed amenable subgroup of $G$. 
	\item Let $L$ be a lattice in $G$.
	\item Let $(X,\mu_X)$ be a standard Borel probability $L$-space.
	\item  Let $\sigma \colon L \times X \rightarrow G'$ be a measurable cocycle with an essentially unique generalized boundary map $\phi \colon G/Q \times X \rightarrow Y$.
\end{itemize}
\end{setup}

\subsection{Pullback along measurable cocycles and generalized boundary maps}\label{subsec:pullback:boundary}

In this section we introduce two different pullback maps in continuous bounded cohomology associated to a measurable cocycle. The first pullback will only depend on the measurable cocycle $\sigma$. The second one will be defined in terms of the generalized boundary map $\phi$. We will show that under suitable conditions the two definitions agree (Lemma~\ref{lem:pullback:implemented:boundary:map}). Despite a priori the first definition might appear more natural, we will mainly exploit the second pullback in the study of the rigidity properties of measurable cocycles.

Given a measurable cocycle $\sigma \colon L \times X \rightarrow G'$ we define a pullback map from $\upC^\bullet_{cb}(G';\bbR)^{G'}$ to $\upC^\bullet_b(L;\bbR)^L$ as follows (compare with~\cite[Remark 14]{moraschini:savini}).

\begin{deft}\label{def:pullback:cocycle}
In the situation of Setup \ref{setup:mult:const}, the \emph{pullback map induced by the measurable cocycle $\sigma$} is given by
$$
\upC_b^\bullet(\sigma) \colon \upC^\bullet_{cb}(G';\bbR) \rightarrow \upC^\bullet_b(L;\bbR) \ , 
$$
$$
\psi \mapsto \upC^\bullet_b(\sigma)(\psi)(\gamma_1,\ldots,\gamma_{\bullet+1}) \coloneqq \int_X(\psi(\sigma(\gamma_1^{-1},x)^{-1}),\ldots,\sigma(\gamma_{\bullet+1}^{-1},x)^{-1})d\mu_X(x) \ .
$$
\end{deft}

\begin{oss}
The previous formula takes inspiration both from Bader-Furman-Sauer's result~\cite[Theorem 5.6]{sauer:companion} and Monod-Shalom's cohomological induction for measurable cocycles associated to couplings~\cite[Section 4.2]{MonShal}.
\end{oss}

\begin{lem}\label{lem:pullback:map:cocycle:cohomology}
In the situation of Setup~\ref{setup:mult:const}, the map $\upC^\bullet_b(\sigma)$ is a well-defined cochain map which restricts to the subcomplexes of invariant cochains. Hence $\upC^\bullet_b(\sigma)$ induces a map in bounded cohomology
$$
\upH^\bullet_b(\sigma):\upH^\bullet_{cb}(G';\bbR) \rightarrow \upH^\bullet_b(L;\bbR)\ , \ \upH^\bullet_b(\sigma)([\psi]):=\left[ \upC^\bullet_b(\sigma)(\psi) \right] \ .
$$
\end{lem}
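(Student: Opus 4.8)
The plan is to check the three assertions in order --- that the formula lands in $\upC^\bullet_b(L;\bbR)$, that it commutes with the coboundary, and that it sends $G'$-invariant cochains to $L$-invariant ones --- after which the induced map in cohomology follows formally.

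First I would settle well-definedness. Fixing $(\gamma_1,\ldots,\gamma_{\bullet+1}) \in L^{\bullet+1}$ and noting that $L$ is countable (it is a lattice in a second countable group), I would pass to a strict representative of $\sigma$ so that the cocycle relation \eqref{eq:zimmer:cocycle} holds for all pairs of elements of $L$ and almost every $x$, and so that $x \mapsto \sigma(\gamma,x)$ is measurable for each $\gamma \in L$. Then the integrand $x \mapsto \psi(\sigma(\gamma_1^{-1},x)^{-1},\ldots,\sigma(\gamma_{\bullet+1}^{-1},x)^{-1})$ is the composite of a measurable map into $(G')^{\bullet+1}$ with the continuous function $\psi$, hence measurable, and it is bounded by $\lVert \psi \rVert_\infty$. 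Since $\mu_X$ is a probability measure, the integral exists and
$$\lvert \upC^\bullet_b(\sigma)(\psi)(\gamma_1,\ldots,\gamma_{\bullet+1}) \rvert \leq \lVert \psi \rVert_\infty,$$
so the output is a bounded function on the discrete group $L$. The cochain identity $\delta^\bullet \circ \upC^\bullet_b(\sigma) = \upC^{\bullet+1}_b(\sigma) \circ \delta^\bullet$ I expect to be routine: the homogeneous coboundary is an alternating sum of face maps that act only on the $G'$-entries, whereas the integration is over the separate variable $x$, so the two operations commute by linearity of the integral.

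The real content is $L$-invariance, and this is where I expect the one genuine obstacle. Given $\psi \in \upC^\bullet_{cb}(G';\bbR)^{G'}$, so that $\psi(g'h_1,\ldots,g'h_{\bullet+1}) = \psi(h_1,\ldots,h_{\bullet+1})$ for all $g' \in G'$, write $F = \upC^\bullet_b(\sigma)(\psi)$; it suffices to prove $F(\gamma\gamma_1,\ldots,\gamma\gamma_{\bullet+1}) = F(\gamma_1,\ldots,\gamma_{\bullet+1})$ for every $\gamma \in L$. The key algebraic step is to expand $(\gamma\gamma_i)^{-1} = \gamma_i^{-1}\gamma^{-1}$ and apply \eqref{eq:zimmer:cocycle} to obtain
$$\sigma((\gamma\gamma_i)^{-1},x)^{-1} = \sigma(\gamma^{-1},x)^{-1}\,\sigma(\gamma_i^{-1},\gamma^{-1}.x)^{-1},$$
so that all $\bullet+1$ entries acquire the common left factor $\sigma(\gamma^{-1},x)^{-1}$; invoking the $G'$-invariance of $\psi$ with $g' = \sigma(\gamma^{-1},x)^{-1}$ then cancels this factor and leaves
$$F(\gamma\gamma_1,\ldots,\gamma\gamma_{\bullet+1}) = \int_X \psi\big(\sigma(\gamma_1^{-1},\gamma^{-1}.x)^{-1},\ldots,\sigma(\gamma_{\bullet+1}^{-1},\gamma^{-1}.x)^{-1}\big)\,d\mu_X(x).$$
Finally I would use that the $L$-action on $(X,\mu_X)$ preserves $\mu_X$, so that $\int_X \Psi(\gamma^{-1}.x)\,d\mu_X(x) = \int_X \Psi(y)\,d\mu_X(y)$ for the integrand $\Psi$, which turns the right-hand side into $F(\gamma_1,\ldots,\gamma_{\bullet+1})$.

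With these three points in hand, $\upC^\bullet_b(\sigma)$ restricts to a cochain map between $(\upC^\bullet_{cb}(G';\bbR)^{G'},\delta^\bullet)$ and $(\upC^\bullet_b(L;\bbR)^L,\delta^\bullet)$ and therefore descends to the asserted map $\upH^\bullet_b(\sigma)$ in bounded cohomology. The delicate points to watch are keeping the noncommutative order of the cocycle factors correct throughout the invariance computation and ensuring the final change of variables genuinely uses measure-preservation of the action (rather than only quasi-invariance); the measurability and strictness bookkeeping for the countable group $L$ is routine but worth recording explicitly so that \eqref{eq:zimmer:cocycle} may be applied pointwise.
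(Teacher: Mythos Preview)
Your proposal is correct and follows essentially the same route as the paper: boundedness via the probability measure, the cochain property as routine, and $L$-invariance from the cocycle identity combined with $G'$-invariance of $\psi$ and measure-preservation of the $L$-action. Your ordering in the invariance computation (cancel the common left factor via $G'$-invariance first, then perform the change of variables $x\mapsto \gamma^{-1}.x$) is in fact tidier than the sequence of steps recorded in the paper.
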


\begin{proof}
It is easy to check that $\upC^\bullet_b(\sigma)$ is a cochain map. Moreover, it sends bounded cochains to bounded cochains because $\mu_X$ is a probability measure.

It only remains to prove that $\upC^\bullet_b(\sigma)$ sends $G'$-invariant continuous cochains to $L$-invariant ones. Let $\psi \in \upC^\bullet_{cb}(G';\bbR)^{G'}$ and $\gamma,\gamma_1,\ldots,\gamma_{\bullet+1} \in L$, then we have
\begin{align*}
\gamma \cdot \upC^\bullet_b(\sigma)(\psi)(\gamma_1,\ldots,\gamma_{\bullet+1})&=\upC^\bullet_b(\sigma)(\psi)(\gamma^{-1}\gamma_1,\ldots,\gamma^{-1}\gamma_{\bullet+1})=\\
&=\int_X \psi(\sigma(\gamma^{-1}_1 \gamma,x)^{-1},\ldots,\sigma(\gamma^{-1}_{\bullet+1} \gamma,x)^{-1})d\mu_X(x)=\\
&=\int_X \psi(\sigma(\gamma,x)^{-1}\sigma(\gamma^{-1}_1,\gamma.x)^{-1},\ldots,\sigma(\gamma,x)^{-1}\sigma(\gamma_{\bullet+1}^{-1},\gamma.x)^{-1})d\mu_X(x)=\\
&=\int_X \psi(\sigma(\gamma,x)^{-1}\sigma(\gamma^{-1}_1,x)^{-1},\ldots,\sigma(\gamma,x)^{-1}\sigma(\gamma_{\bullet+1}^{-1},x)^{-1})d\mu_X(x)=\\
&=\int_X\psi(\sigma(\gamma^{-1}_1,x)^{-1},\ldots,\sigma(\gamma_{\bullet+1}^{-1},x)^{-1})d\mu_X(x)=\\
&=\upC^\bullet_b(\sigma)(\psi)(\gamma_1,\ldots,\gamma_{\bullet+1}) \ ,
\end{align*}
where the second line is equal to the third one because of the definition of measurable cocycle (Equation~\eqref{eq:zimmer:cocycle}). Then, the $L$-invariance of the measure $\mu_X$ shows that the third line is equal to the fourth one. Finally, the $G'$-invariance of $\psi$ concludes the computation.
\end{proof}

As anticipated we now explain how to define a different pullback map via generalized boundary maps in the situation of Setup~\ref{setup:mult:const}. This approach takes inspiration from a work by Bader, Furman and Sauer~\cite[Proposition~4.2]{sauer:articolo} and has already produced some applications in special settings (Subsection~\ref{subsec:applications:baby}).
We define the pullback along a generalized boundary map as the composition of two different maps defined in continuous bounded cohomology. The Banach space $\textup{L}^\infty(X) \coloneqq \textup{L}^\infty(X; \mathbb{R})$ has a natural structure of Banach $L$-module given by the following $L$-action
$$
\gamma.f = f(\gamma^{-1}.x) \ , 
$$ 
for all $\gamma \in L$ and $f \in \, \upL^\infty(X)$. This leads to the following

\begin{deft}\label{def:pullback:boundary}
In the situation of Setup~\ref{setup:mult:const}, the \emph{$\upL^\infty(X)$-pullback along $\phi$} is the following map
$$
\upC^\bullet(\phi) \colon \mathcal{B}^\infty(Y^{\bullet + 1}; \mathbb{R})^{G'} \rightarrow \textup{L}_{\text{w}^*}^\infty((G \slash Q)^{\bullet+1}; \upL^\infty(X))^L 
$$
$$
\upC^\bullet(\phi)(\psi) (\eta_1, \ldots, \eta_{\bullet+1}) \coloneqq \left(x \mapsto \psi(\phi(\eta_1, x), \ldots, \phi(\eta_{\bullet+1}, x))\right) \ ,
$$
where $\psi \in \, \mathcal{B}^\infty(Y^{\bullet + 1}; \mathbb{R})^{G'}$, $\eta_1, \ldots, \eta_{\bullet+1} \in \, G \slash Q$ and $x \in \, X$.
\end{deft}

\begin{lem}\label{lemma:pullback:cochain}
The map $\upC^\bullet(\phi)$ is a well-defined norm non-increasing cochain map. 
\end{lem}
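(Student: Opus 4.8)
The plan is to verify in turn the four assertions packaged into the statement: that $\upC^\bullet(\phi)(\psi)$ genuinely defines an element of $\textup{L}_{\text{w}^*}^\infty((G \slash Q)^{\bullet+1}; \upL^\infty(X))$, that this element is $L$-invariant, that the assignment does not increase the $\upL^\infty$-norm, and that it commutes with the coboundary operators.

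First I would settle well-definedness. Fix $(\eta_1,\ldots,\eta_{\bullet+1}) \in (G/Q)^{\bullet+1}$. Since $\phi$ and $\psi$ are measurable, the map $x \mapsto \psi(\phi(\eta_1,x),\ldots,\phi(\eta_{\bullet+1},x))$ is measurable, and it is bounded by $\lVert \psi \rVert_\infty$ because $\psi \in \mathcal{B}^\infty(Y^{\bullet+1};\bbR)^{G'}$ is everywhere defined and bounded; hence it represents an element of $\upL^\infty(X)$. To see that the resulting assignment $(\eta_1,\ldots,\eta_{\bullet+1}) \mapsto [x \mapsto \psi(\phi(\eta_1,x),\ldots,\phi(\eta_{\bullet+1},x))]$ is weak-$^*$ measurable as a map into $\upL^\infty(X) = (\upL^1(X))^*$, I would test against an arbitrary $g \in \upL^1(X)$: by joint measurability of the map $(\eta_1,\ldots,\eta_{\bullet+1},x) \mapsto \psi(\phi(\eta_1,x),\ldots,\phi(\eta_{\bullet+1},x))$ and Fubini, the function
$$
(\eta_1,\ldots,\eta_{\bullet+1}) \mapsto \int_X \psi(\phi(\eta_1,x),\ldots,\phi(\eta_{\bullet+1},x)) g(x)\, d\mu_X(x)
$$
is measurable. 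Thus $\upC^\bullet(\phi)(\psi)$ lies in $\mathcal{B}^\infty((G/Q)^{\bullet+1};\upL^\infty(X))$, and passing to its class through the canonical map $\mathfrak{c}^\bullet$ of \eqref{eq:canonical:map:B:L} lands it in $\textup{L}_{\text{w}^*}^\infty$. The norm estimate is then immediate: for every $(\eta_1,\ldots,\eta_{\bullet+1})$ the representative is bounded in absolute value by $\lVert \psi \rVert_\infty$, so taking the supremum over $(G/Q)^{\bullet+1}$ of the $\upL^\infty(X)$-norms yields $\lVert \upC^\bullet(\phi)(\psi)\rVert_\infty \le \lVert \psi \rVert_\infty$.

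For $L$-invariance I would unravel the $L$-action on the target, recalling that on the coefficient module $\upL^\infty(X)$ it is $(\gamma.h)(x)=h(\gamma^{-1}.x)$. Computing $(\gamma.\upC^\bullet(\phi)(\psi))(\eta_1,\ldots,\eta_{\bullet+1})(x)$ gives $\psi(\phi(\gamma^{-1}\eta_1,\gamma^{-1}.x),\ldots,\phi(\gamma^{-1}\eta_{\bullet+1},\gamma^{-1}.x))$; applying the $\sigma$-equivariance of $\phi$ slot by slot replaces each $\phi(\gamma^{-1}\eta_i,\gamma^{-1}.x)$ by $\sigma(\gamma^{-1},x)\phi(\eta_i,x)$, and the $G'$-invariance of $\psi$ then cancels the common factor $\sigma(\gamma^{-1},x) \in G'$, recovering $\upC^\bullet(\phi)(\psi)(\eta_1,\ldots,\eta_{\bullet+1})(x)$. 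The cochain-map property is formal: both coboundary operators are the simplicial ones that delete a coordinate, and since $\upC^\bullet(\phi)$ acts slot by slot through $\phi$, deleting the $j$-th point of $(G/Q)^{\bullet+1}$ corresponds exactly to deleting the $j$-th argument of $\psi$, so $\upC^{\bullet+1}(\phi)\circ\delta^\bullet = \delta^\bullet \circ \upC^\bullet(\phi)$.

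The only genuinely delicate point—the remaining steps being direct computations—is the measurability claim, namely confirming that the pointwise formula really produces a weak-$^*$ measurable, hence well-defined, element of $\textup{L}_{\text{w}^*}^\infty$. This is exactly where one needs the everywhere-defined complex $\mathcal{B}^\infty$ together with the canonical map $\mathfrak{c}^\bullet$ of \eqref{eq:canonical:map:B:L}: one first produces an honest representative in $\mathcal{B}^\infty((G/Q)^{\bullet+1};\upL^\infty(X))^L$ and only afterwards descends to $\textup{L}_{\text{w}^*}^\infty$, as anticipated in the discussion preceding Setup~\ref{setup:mult:const}.
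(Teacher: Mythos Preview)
Your argument is correct and follows essentially the same route as the paper: the $L$-invariance computation is identical (diagonal action, $\sigma$-equivariance of $\phi$, then $G'$-invariance of $\psi$), and the norm and cochain-map verifications are the straightforward ones the paper dismisses as ``immediate''. The only difference is that you verify weak-$^*$ measurability by hand via Fubini and testing against $\upL^1(X)$, whereas the paper invokes Monod's identification $\textup{L}_{\text{w}^*}^\infty((G/Q)^{\bullet+1};\upL^\infty(X))\cong \upL^\infty((G/Q)^{\bullet+1}\times X)$ to reduce everything to joint measurability; one small slip is that the map $\mathfrak{c}^\bullet$ of \eqref{eq:canonical:map:B:L} is defined in cohomology, so what you actually use to descend from $\mathcal{B}^\infty$ to $\upL^\infty_{\textup{w}^*}$ is just the quotient by null functions, not $\mathfrak{c}^\bullet$.
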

\begin{proof}
Since $\upC^\bullet(\phi)$ is defined as a pullback, it is immediate to check that it is a norm non-increasing cochain map. 

Let us show now that for every $\psi \in \, \mathcal{B}^\infty(Y^{\bullet + 1}; \mathbb{R})^{G'}$, the cocycle $\upC^\bullet(\phi)(\psi)$ is $L$-invariant. First, by \cite[Corollary 2.3.3]{monod:libro} we can identify 
$$
\textup{L}_{\text{w}^*}^\infty((G \slash Q)^{\bullet+1}; \upL^\infty(X))^L \cong \textup{L}^\infty((G \slash Q)^{\bullet+1} \times X)^L \ ,
$$
where the latter space is endowed with its natural diagonal $L$-action. Then, for almost every $x \in \, X$, $\gamma \in \, L$ and $\eta_1, \ldots, \eta_{\bullet+1} \in \, G \slash Q$, we have
\begin{align*}
\gamma \cdot \upC^\bullet(\phi)(\psi)(\eta_1, \ldots, \eta_{\bullet+1}) (x) &= \upC^\bullet(\phi)(\psi)(\gamma^{-1}.\eta_1,\ldots,\gamma^{-1}. \eta_{\bullet+1})(\gamma^{-1}.x)=\\
&=\psi(\phi(\gamma^{-1}.\eta_1,  \gamma^{-1}.x), \ldots, \phi(\gamma^{-1}. \eta_{\bullet+1},\gamma^{-1}.x))= \\
&= \psi(\sigma(\gamma^{-1}, x) \phi(\eta_1, x), \ldots, \sigma(\gamma^{-1}, x) \phi(\eta_{\bullet+1}, x))=\\
&= \psi(\phi(\eta_1, x), \ldots, \phi(\eta_{\bullet+1}, x)) \\
&= \upC^\bullet(\phi)(\psi)(\eta_1, \ldots, \eta_{\bullet+1}) (x) \ .
\end{align*}
Here we first used the definition of diagonal action, then the $\sigma$-equivariance of $\phi$ and finally the $G'$-invariance of $\psi$.
\end{proof}

Since our final goal is to pullback a cocycle $\psi \in \, \mathcal{B}^\infty(Y^{\bullet + 1}; \mathbb{R})^{G'}$ along $\phi$ obtaining a new cocycle in $\textup{L}^\infty((G \slash Q)^{\bullet+1}; \mathbb{R})^L$, we need to compose the $\upL^\infty(X)$-pullback along $\phi$ with the integration map (compare with \cite{sauer:articolo, savini3:articolo, moraschini:savini}).

\begin{deft}\label{def:integration:map}
In the situation of Setup~\ref{setup:mult:const}, the \emph{integration map} $\upI_X^\bullet$ is the following cochain map
$$
\upI_X^\bullet \colon \textup{L}_{\text{w}^*}^\infty((G \slash Q)^{\bullet+1}; \upL^\infty(X))^L \rightarrow \textup{L}^\infty((G \slash Q)^{\bullet+1}; \mathbb{R})^L \\ 
$$
$$
\upI_X^\bullet(\psi)(\eta_1,\ldots,\eta_{\bullet+1}) \coloneqq \int_X \psi(\eta_1, \ldots, \eta_{\bullet+1})(x) d\mu_X(x) \ ,
$$
where $\psi \in \, \textup{L}^\infty((G \slash Q)^{\bullet+1}; \upL^\infty(X))^L$, $\eta_1, \ldots, \eta_{\bullet+1} \in \, G \slash Q$ and $\mu_X$ is the probabilty measure on the standard Borel probability $L$-space $X$.
\end{deft}

\begin{lem}\label{lemma:int:cochain:map}
The integration map $\upI^\bullet_X$ is a well-defined norm non-increasing cochain map.
\end{lem}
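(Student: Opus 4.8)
The plan is to verify, in turn, the three assertions packed into the statement: that $\upI^\bullet_X$ actually takes values in $\textup{L}^\infty((G \slash Q)^{\bullet+1};\bbR)^L$ (well-definedness, including $L$-invariance of the output), that it does not increase the $\upL^\infty$-norm, and that it commutes with the homogeneous coboundary $\delta^\bullet$. Exactly as in the proof of Lemma~\ref{lemma:pullback:cochain}, the starting point is the canonical identification
$$
\textup{L}_{\text{w}^*}^\infty((G \slash Q)^{\bullet+1}; \upL^\infty(X))^L \cong \textup{L}^\infty((G \slash Q)^{\bullet+1} \times X)^L
$$
furnished by \cite[Corollary 2.3.3]{monod:libro}, under which a class $\psi$ is represented by a jointly measurable, essentially bounded function of $(\eta_1,\ldots,\eta_{\bullet+1},x)$. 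This is what turns the fibrewise integral $\int_X \psi(\eta_1,\ldots,\eta_{\bullet+1})(x)\,d\mu_X(x)$ into a genuine partial integration to which Fubini's theorem applies.

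First I would establish well-definedness and the norm estimate simultaneously. Joint measurability together with Fubini guarantees that the assignment $(\eta_1,\ldots,\eta_{\bullet+1}) \mapsto \int_X \psi(\eta_1,\ldots,\eta_{\bullet+1})(x)\,d\mu_X(x)$ is measurable, while the pointwise bound $\left| \int_X \psi(\eta_1,\ldots,\eta_{\bullet+1})(x)\,d\mu_X(x) \right| \leq \int_X \left| \psi(\eta_1,\ldots,\eta_{\bullet+1})(x) \right|\,d\mu_X(x) \leq \|\psi\|_\infty$, where the last step uses that $\mu_X$ is a \emph{probability} measure, shows at once both that the output lies in $\textup{L}^\infty$ and that $\|\upI^\bullet_X(\psi)\|_\infty \leq \|\psi\|_\infty$.

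The $L$-invariance of the output is the only computation requiring genuine input. Writing the diagonal $L$-action explicitly, invariance of $\psi$ reads $\psi(\gamma^{-1}.\eta_1,\ldots,\gamma^{-1}.\eta_{\bullet+1})(\gamma^{-1}.x) = \psi(\eta_1,\ldots,\eta_{\bullet+1})(x)$ for almost every argument. Substituting $x \mapsto \gamma.x$ rewrites the translated integrand as $\psi(\eta_1,\ldots,\eta_{\bullet+1})(\gamma.x)$, and then the change of variables $y = \gamma.x$, legitimate because the $L$-action on $(X,\mu_X)$ is measure-preserving, brings the integral back to $\upI^\bullet_X(\psi)(\eta_1,\ldots,\eta_{\bullet+1})$. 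Hence $\gamma \cdot \upI^\bullet_X(\psi) = \upI^\bullet_X(\psi)$ for every $\gamma \in L$.

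Finally, the cochain property is immediate from linearity of the integral: since $\delta^\bullet$ acts only on the $(G\slash Q)$-variables and leaves the $\upL^\infty(X)$-coefficient untouched, the finite alternating sum defining $\delta^\bullet(\upI^\bullet_X(\psi))$ can be pulled inside $\int_X$, yielding $\upI^{\bullet+1}_X(\delta^\bullet\psi)$. The only genuinely delicate point in the whole argument is the measurability of the integrated cochain: this is precisely where the identification with $\textup{L}^\infty((G\slash Q)^{\bullet+1}\times X)^L$ and Fubini are indispensable, everything else being a routine consequence of linearity of the integral and of $\mu_X$ being an $L$-invariant probability measure.
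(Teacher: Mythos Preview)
Your proof is correct and follows essentially the same approach as the paper's own proof. The paper's argument gives only the $L$-invariance computation explicitly (identical to yours) and dismisses the remaining points as ``immediate to check''; you have simply made those points explicit, invoking the identification with $\textup{L}^\infty((G\slash Q)^{\bullet+1}\times X)^L$ and Fubini for measurability, the probability hypothesis on $\mu_X$ for the norm bound, and linearity of the integral for the cochain property.
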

\begin{proof}
Given a cocycle $\psi \in \, \textup{L}_{\text{w}^*}^\infty((G \slash Q)^{\bullet+1}; \upL^\infty(X))^L$, it is easy to show that $\upI^\bullet_X(\psi)$ is $L$-invariant. Indeed, given $\eta_1,\ldots,\eta_{\bullet+1} \in G/Q$ and $\gamma \in \, L$, we have
\begin{align*}
\gamma . \upI_X^\bullet(\psi)(\eta_1, \ldots, \eta_{\bullet+1}) &= \int_X \psi(\gamma^{-1}.\eta_1, \ldots, \gamma^{-1}. \eta_{\bullet+1})(x) d\mu_X(x)=\\
&= \int_X \psi(\eta_1, \ldots, \eta_{\bullet+1})(\gamma.x) d\mu_X(x) \\
&= \int_X \psi(\eta_1, \ldots, \eta_{\bullet+1})(x) d\mu_X(x) = \upI^\bullet_X(\psi)(\eta_1, \ldots, \eta_{\bullet+1}) \ ,
\end{align*}
where we used the $L$-invariance of both $\psi$ and $\mu_X$.

Since it is immediate to check that the integration map is also a norm non-increasing cochain map, we get the thesis.
\end{proof}

\begin{oss}\label{oss:unbounded:cochains}
The previous construction via integration is only possible working with bounded cocycles. Indeed, there is no hope to extend this map to the case of unbounded cochains~\cite[Remark~13, Remark~16]{moraschini:savini}.
\end{oss}

We are now ready to define the \emph{pullback map along} $\phi$.

\begin{deft}\label{def:pullback:not:fibered}
In the situation of Setup~\ref{setup:mult:const}, the \emph{pullback map along the (generalized) boundary map} $\phi$ is the following cochain map
$$
\upC^\bullet(\Phi^X) \colon \mathcal{B}^\infty(Y^{\bullet + 1}; \mathbb{R})^{G'} \rightarrow \textup{L}^\infty((G \slash Q)^{\bullet+1}; \mathbb{R})^L
$$
$$
\upC^\bullet(\Phi^X) \coloneqq \upI^\bullet_X \circ \upC^\bullet(\phi) \ .
$$
\end{deft}

\begin{oss}\label{oss:pullback:restringe:alternating:cochains}
The restriction of the pullback along $\phi$ to the subcomplexes of alternating cochains (Definition~\ref{def:alternating}) is well-defined. 
\end{oss}

The fact that the pullback map induces a well-defined map in cohomology is proved in the following

\begin{prop}\label{prop:pullback:cohomology}
In the situation of Setup~\ref{setup:mult:const} the pullback map $\upC^\bullet(\Phi^X)$ is a norm non-increasing cochain map.
Hence, it induces a well-defined map
$$
\upH^\bullet(\Phi^X) \colon \upH^\bullet(\calB^\infty(Y^{\bullet+1};\bbR)^{G'}) \rightarrow \upH^\bullet_{b}(L;\bbR), \hspace{5pt} \upH^\bullet(\Phi^X)([\psi])\coloneqq[\upC^\bullet(\Phi^X)(\psi)] \ .
$$
The same result still holds for the subcomplexes of alternating cochains. 
\end{prop}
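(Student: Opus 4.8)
The plan is to assemble the statement from the two preceding lemmas together with the functorial identification of the target complex; the argument is essentially formal. First I would recall that, by Definition \ref{def:pullback:not:fibered}, the map $\upC^\bullet(\Phi^X)$ is by construction the composition $\upI^\bullet_X \circ \upC^\bullet(\phi)$. Lemma \ref{lemma:pullback:cochain} shows that $\upC^\bullet(\phi)$ is a well-defined norm non-increasing cochain map, and Lemma \ref{lemma:int:cochain:map} gives the same conclusion for $\upI^\bullet_X$. Since a composition of cochain maps is again a cochain map (each factor commutes with the relevant homogeneous coboundary operator $\delta^\bullet$, hence so does the composite), and since the operator norm is submultiplicative so that a composition of norm non-increasing maps is norm non-increasing, the first assertion of the proposition follows at once.

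Next I would invoke the standard homological fact that a cochain map descends to cohomology: because $\upC^\bullet(\Phi^X)$ commutes with $\delta^\bullet$, it carries cocycles to cocycles and coboundaries to coboundaries, so the assignment $[\psi] \mapsto [\upC^\bullet(\Phi^X)(\psi)]$ is independent of the chosen representative and defines a linear map
$$
\upH^\bullet(\calB^\infty(Y^{\bullet+1};\bbR)^{G'}) \longrightarrow \upH^\bullet\big(\textup{L}^\infty((G \slash Q)^{\bullet+1};\bbR)^L\big) \ .
$$
Note that here I do \emph{not} need the source complex $\calB^\infty(Y^{\bullet+1};\bbR)^{G'}$ to compute $\upH^\bullet_{cb}(G';\bbR)$ (indeed it only yields a strong resolution in general); the domain of the induced map is simply the cohomology of that complex, exactly as in the statement.

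The one point deserving care is the identification of the codomain with $\upH^\bullet_b(L;\bbR)$. For this I would use that $Q$ is amenable, so by Remark \ref{oss:am:spaces} together with Example \ref{es:reg:spaces}.2 the quotient $G \slash Q$ is an amenable regular $G$-space. Then Example \ref{es:L:inf:G:Q} and Remark \ref{oss:L:resol:uguale:G} provide an isometric isomorphism between the cohomology of $(\textup{L}^\infty((G \slash Q)^{\bullet+1};\bbR)^L, \delta^\bullet)$ and $\upH^\bullet_{cb}(L;\bbR)$; since $L$ is a lattice, hence discrete, the latter coincides with $\upH^\bullet_b(L;\bbR)$ by Remark \ref{oss:discr:group:cohom:same:G}. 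Composing the induced map above with this isomorphism yields the map $\upH^\bullet(\Phi^X)$ claimed in the statement. This amenability hypothesis is the only substantive input; everything else is bookkeeping.

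Finally, for the alternating version I would observe that both constituent maps preserve alternating cochains: for $\upC^\bullet(\phi)$ this is recorded in Remark \ref{oss:pullback:restringe:alternating:cochains}, while for $\upI^\bullet_X$ it is immediate, since integration over $X$ acts only on the auxiliary $\upL^\infty(X)$-variable and therefore commutes with permutations of the $G \slash Q$-arguments. Hence $\upC^\bullet(\Phi^X)$ restricts to the alternating subcomplexes, and the verbatim repetition of the preceding two paragraphs applies, now using the alternating form of the functorial isomorphism asserted in Theorem \ref{teor:monod:2:rel:inj:strong}. I do not anticipate any genuine obstacle; the only place requiring a moment's attention is ensuring that the target resolution really computes the bounded cohomology of $L$, which is precisely where amenability of $Q$ enters.
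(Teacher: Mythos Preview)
Your proposal is correct and follows essentially the same approach as the paper: decompose $\upC^\bullet(\Phi^X)$ as $\upI^\bullet_X \circ \upC^\bullet(\phi)$, invoke Lemmas~\ref{lemma:pullback:cochain} and~\ref{lemma:int:cochain:map}, and then use amenability of $Q$ (via Example~\ref{es:reg:spaces}.2, Remark~\ref{oss:am:spaces}, Remark~\ref{oss:L:resol:uguale:G}) to identify the target complex with one computing $\upH^\bullet_b(L;\bbR)$. Your write-up is simply more explicit about the routine homological bookkeeping and the alternating case, which the paper dispatches with ``mutatis mutandis''.
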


\begin{proof}
As a consequence of both Lemmas~\ref{lemma:pullback:cochain} and~\ref{lemma:int:cochain:map}, the pullback $\upC^\bullet(\Phi^X)$ is a norm non-increasing cochain map. Indeed, it is the composition of two such maps, namely $\upC^\bullet(\phi)$ and $\upI^\bullet_X$.

Since $Q$ is an amenable group, then $G\slash Q$ is an amenable regular $G$-space (Example~\ref{es:reg:spaces}.2 and Remark~\ref{oss:am:spaces}). Hence, by Remark~\ref{oss:L:resol:uguale:G} the complex of $L$-invariant essentially bounded functions $\upL^\infty((G/Q)^{\bullet+1};\bbR)^L$ computes the continuous bounded cohomology $\upH^\bullet_{b}(L;\bbR)$. 

The same proof adapts mutatis mutandis to the case of alternating cochains.
\end{proof}

\begin{oss} \label{oss:more:general:amenable}
One might define a pullback map in cohomology using any measurable $\sigma$-equivariant map $\phi \colon S \times X \rightarrow Y$, where $S$ is any amenable $L$-space.  However, since we will not need this formulation in the sequel, we preferred to keep the previous setting.
\end{oss}

Since we have introduced two different pullback maps in continuous bounded cohomology arising from measurable cocycles, it is natural to ask whether they agree. The following lemma completely describes the situation (compare with~\cite[Corollary 2.7]{burger:articolo}). 

\begin{lem}\label{lem:pullback:implemented:boundary:map}
In the situation of Setup~\ref{setup:mult:const}, let $\psi \in \calB^\infty(Y^{\bullet+1};\bbR)^{G'}$  be a measurable cocycle.
Then 
$$
\upC^\bullet(\Phi^X)(\psi) \in \upL^\infty((G/Q)^{\bullet+1};\bbR)^L 
$$
is a representative of the class $\upH^\bullet_b(\sigma)([\psi]) \in \upH^\bullet_{b}(L;\bbR)$.
\end{lem}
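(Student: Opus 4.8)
The plan is to lift both pullbacks to $\upL^\infty(X)$-coefficients, compare them inside $\upH^\bullet_{cb}(L;\upL^\infty(X))$, where the two competing resolutions meet, and then descend back to $\bbR$-coefficients by a single $L$-equivariant coefficient morphism, namely integration against $\mu_X$. For the reduction, note that since $\mu_X$ is $L$-invariant and $\upL^\infty(X)=(\upL^1(X))^*$, the functional $\upL^\infty(X)\to\bbR$, $f\mapsto\int_X f\,d\mu_X$, is evaluation against the $L$-invariant vector $1\in\upL^1(X)$; it is therefore a weak-$*$ continuous morphism of Banach $L$-modules, inducing a map $\upH^\bullet_{cb}(L;\upL^\infty(X))\to\upH^\bullet_{cb}(L;\bbR)$ realized on cochains by $\upI^\bullet_X$ (Definition~\ref{def:integration:map}). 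On the boundary side this is, by construction, exactly how $\upC^\bullet(\Phi^X)=\upI^\bullet_X\circ\upC^\bullet(\phi)$ is built (Definition~\ref{def:pullback:not:fibered}). On the cocycle side I would introduce the obvious $\upL^\infty(X)$-valued refinement $\widetilde{\upC}^\bullet_b(\sigma)$ of Definition~\ref{def:pullback:cocycle}, sending $\psi$ to $(\gamma_1,\dots,\gamma_{\bullet+1})\mapsto\bigl(x\mapsto\psi(\sigma(\gamma_1^{-1},x)^{-1},\dots,\sigma(\gamma_{\bullet+1}^{-1},x)^{-1})\bigr)$; the computation of Lemma~\ref{lem:pullback:map:cocycle:cohomology} shows verbatim that this is an $L$-equivariant cochain map into $\upC^\bullet_b(L;\upL^\infty(X))^L$, and by construction $\upI^\bullet_X\circ\widetilde{\upC}^\bullet_b(\sigma)=\upC^\bullet_b(\sigma)$. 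Thus it suffices to prove that $\widetilde{\upC}^\bullet_b(\sigma)$ and $\upC^\bullet(\phi)$ induce the same map on $\upH^\bullet_{cb}(L;\upL^\infty(X))$ once the source classes are matched through the canonical map $\mathfrak{c}$ of~\eqref{eq:canonical:map:B:L}.

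For the core comparison I would work with strong resolutions by relatively injective $L$-modules. The module $\upL^\infty(X)$ is a dual Banach $L$-module, so since $Q$ is amenable ($G/Q$ is then an amenable regular $G$-space, a fortiori an amenable regular $L$-space), Theorem~\ref{teor:monod:2:rel:inj:strong} together with Remark~\ref{oss:L:resol:uguale:G} shows that $\upL^\infty_{\text{w}^*}((G/Q)^{\bullet+1};\upL^\infty(X))^L$ computes $\upH^\bullet_{cb}(L;\upL^\infty(X))$, while $\upC^\bullet_b(L;\upL^\infty(X))^L$ does the same through the standard relatively injective resolution. Now $\psi\in\calB^\infty(Y^{\bullet+1};\bbR)^{G'}$ sits in the strong $G'$-resolution $\calB^\infty(Y^{\bullet+1};\bbR)$ of $\bbR$, whereas $\widetilde{\upC}^\bullet_b(\sigma)$ is defined on the standard $G'$-resolution; choosing a continuous bounded $G'$-cocycle $\psi_0$ representing $\mathfrak{c}([\psi])\in\upH^\bullet_{cb}(G';\bbR)$, one has by definition $\upH^\bullet_b(\sigma)([\psi])=[\upI^\bullet_X\widetilde{\upC}^\bullet_b(\sigma)(\psi_0)]$. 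The point, which is the content of~\cite[Corollary~2.7]{burger:articolo} carried through with $\upL^\infty(X)$-coefficients, is that the $\sigma$-equivariance of $\phi$ makes the $\phi$-pullback out of $\calB^\infty(Y^{\bullet+1};\bbR)$ and the $\sigma$-pullback out of $\upC^\bullet_{cb}(G';\bbR)$ compatible with the canonical comparison chain maps between these resolutions: both realise the unique-up-to-$L$-homotopy extension of the coefficient identity on $\upL^\infty(X)$ to the two strong, relatively injective $L$-resolutions, as furnished by the functoriality of strong resolutions by relatively injective modules (see~\cite[Section~7.2]{monod:libro} and Theorem~\ref{thm:Monod:strong:rel:inj}). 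Consequently $[\upC^\bullet(\phi)(\psi)]=[\widetilde{\upC}^\bullet_b(\sigma)(\psi_0)]$ in $\upH^\bullet_{cb}(L;\upL^\infty(X))$, and applying $\upI^\bullet_X$ yields the claim.

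I expect the main obstacle to be precisely this compatibility across \emph{different} source resolutions: one must reconcile the fact that $\psi$ is only a weak-$*$ measurable cocycle on $Y$ (with $Y$ not amenable, so $\calB^\infty(Y^{\bullet+1};\bbR)$ is only strong, not relatively injective) with the definition of $\upH^\bullet_b(\sigma)$ on honest continuous cochains on $G'$. Making the uniqueness argument rigorous requires invoking the canonical chain map realising $\mathfrak{c}$, checking that the $\phi$-pulled-back cochain genuinely lands in $\upL^\infty_{\text{w}^*}$ rather than merely in $\calB^\infty$ (which is exactly what~\eqref{eq:canonical:map:B:L} provides, and what the remark preceding Section~\ref{subsec:transfer:maps} anticipates using here), and verifying that integration against $\mu_X$ commutes with the canonical isomorphisms between all the resolutions in play. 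Everything else is the routine cochain-level bookkeeping already carried out in Lemmas~\ref{lemma:pullback:cochain},~\ref{lemma:int:cochain:map} and~\ref{lem:pullback:map:cocycle:cohomology}.
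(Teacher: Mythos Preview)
Your approach is correct and is, in essence, an explicit unpacking of what the paper does by citation. The paper's own proof consists of a single commutative triangle
\[
\xymatrix{
\upH^\bullet(\calB^\infty(Y^{\bullet+1};\bbR)^{G'}) \ar[rr]^-{\upH^\bullet(\Phi^X)} \ar[d]_-{\mathfrak{c}^\bullet} && \upH^\bullet_b(L;\bbR)\\
\upH^\bullet_{cb}(G';\bbR) \ar[urr]_-{\upH^\bullet_b(\sigma)} &&
}
\]
whose commutativity is attributed to \cite[Proposition~1.2]{burger:articolo}. That proposition is formulated for representations, and the paper is implicitly asserting that the same homological-algebra argument goes through once one replaces the pullback along a representation by the pullback along a cocycle. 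Your proposal supplies precisely this adaptation: you pass to $\upL^\infty(X)$-coefficients so that the $\sigma$-equivariance of $\phi$ becomes genuine $L$-equivariance of chain maps between strong resolutions, invoke the uniqueness-up-to-homotopy of extensions into relatively injective targets (the mechanism behind \cite[Proposition~1.2 and Corollary~2.7]{burger:articolo}), and then push down along the $L$-morphism $\int_X(\,\cdot\,)\,d\mu_X$. This is exactly the content hidden in the paper's citation.

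The one place where you should be slightly more precise is the passage ``both realise the unique-up-to-$L$-homotopy extension of the coefficient identity''. Strictly speaking, $\calB^\infty(Y^{\bullet+1};\bbR)$ is not an $L$-resolution at all; it becomes one only after you give it the $\sigma$-twisted $L$-action (equivalently, after you embed it into $\upL^\infty_{\textup{w}^*}(Y^{\bullet+1};\upL^\infty(X))$ with the diagonal action). Once that is said, the extension-uniqueness argument applies to the two $L$-chain maps into the two relatively injective $L$-resolutions of $\upL^\infty(X)$, exactly as you outline. This is a wording issue rather than a gap; the substance of your argument matches the paper's.
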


\begin{proof}
It is sufficient to consider the following commutative diagram~\cite[Proposition 1.2]{burger:articolo}
$$
\xymatrix{
\upH^\bullet(\calB^\infty(Y^{\bullet+1};\bbR)^{G'}) \ar[rrr]^-{\upH^\bullet(\Phi^X)} \ar[dd]_-{\mathfrak{c}^\bullet} &&& \upH^\bullet_b(L;\bbR)\\ 
\\
\upH^\bullet_{cb}(G';\bbR) \ar[uurrr]_-{\upH^\bullet_b(\sigma)} \ ,
}
$$
where $\mathfrak{c}^\bullet$ is the map introduced in Equation~(\ref{eq:canonical:map:B:L}).
\end{proof}

Finally, we show that the pullback along cohomologous measurable cocyles is the same (compare with~\cite[Proposition~13, Proposition~20]{moraschini:savini}).

\begin{prop}\label{prop:invariance:cohomology}
In the situation of Setup~\ref{setup:mult:const}, let $f.\sigma \colon L \times X \rightarrow G'$ be a cocycle cohomologous to $\sigma$ with respect to a measurable map $f \colon X \rightarrow G'$. Then, for every $\psi \in \mathcal{B}^\infty(Y^{\bullet + 1}; \mathbb{R})^{G'}$, we have $$\upC^\bullet(\Phi^X)(\psi) = \upC^\bullet(f.\Phi^X)(\psi) \ .$$
Here $\upC^\bullet(\Phi^X)$ and $\upC^\bullet(f.\Phi^X)$ denote the pullback maps along the associated boundary maps $\phi$ and $f.\phi$, respectively.
\end{prop}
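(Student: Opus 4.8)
The plan is to prove the identity directly at the level of cochains by unwinding the definitions; the equality of the induced maps in cohomology is then immediate. Fix a cocycle $\psi \in \mathcal{B}^\infty(Y^{\bullet+1};\bbR)^{G'}$. First I would recall, combining Definition~\ref{def:pullback:not:fibered} with Definitions~\ref{def:pullback:boundary} and~\ref{def:integration:map}, that for almost every $(\eta_1, \ldots, \eta_{\bullet+1}) \in (G/Q)^{\bullet+1}$ the pullback along $\phi$ is computed by
\[
\upC^\bullet(\Phi^X)(\psi)(\eta_1, \ldots, \eta_{\bullet+1}) = \int_X \psi(\phi(\eta_1, x), \ldots, \phi(\eta_{\bullet+1}, x)) \, d\mu_X(x),
\]
and likewise that the pullback along the twisted boundary map $f.\phi$ is
\[
\upC^\bullet(f.\Phi^X)(\psi)(\eta_1, \ldots, \eta_{\bullet+1}) = \int_X \psi((f.\phi)(\eta_1, x), \ldots, (f.\phi)(\eta_{\bullet+1}, x)) \, d\mu_X(x).
\]
Here $f.\phi$ is a legitimate boundary map associated to the cohomologous cocycle $f.\sigma$ by Remark~\ref{oss:twisted:boundary:map}, so the right-hand pullback is indeed well defined.

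The key step is to substitute the definition of the twisted boundary map. By Definition~\ref{def:twisted:map} we have $(f.\phi)(\eta_i, x) = f(x)^{-1} \phi(\eta_i, x)$ for each index $i$, so the integrand in the second display equals
\[
\psi\bigl(f(x)^{-1} \phi(\eta_1, x), \ldots, f(x)^{-1} \phi(\eta_{\bullet+1}, x)\bigr).
\]
The crucial observation is that the \emph{same} group element $f(x)^{-1} \in G'$ is applied simultaneously to every argument. Since the coefficients form the trivial real $G'$-module, the $G'$-invariance of $\psi$ reads $\psi(g^{-1} y_1, \ldots, g^{-1} y_{\bullet+1}) = \psi(y_1, \ldots, y_{\bullet+1})$ for all $g \in G'$; applying this with $g = f(x)$ cancels the factor $f(x)^{-1}$ for almost every $x \in X$.

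Hence for almost every $x$ the two integrands coincide, and integrating over $X$ with respect to $\mu_X$ yields equality of the two cochains at almost every tuple $(\eta_1, \ldots, \eta_{\bullet+1})$; this is precisely $\upC^\bullet(\Phi^X)(\psi) = \upC^\bullet(f.\Phi^X)(\psi)$ in $\upL^\infty((G/Q)^{\bullet+1};\bbR)^L$. I do not expect a genuine obstacle here: the only point requiring care is that the twisting introduces a single left translation $f(x)^{-1}$ common to all slots of $\psi$, which is exactly what lets $G'$-invariance absorb it, and the integration over $X$ plays no further role beyond being applied to two functions that already agree almost everywhere.
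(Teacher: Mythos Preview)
Your proof is correct and follows essentially the same approach as the paper: both unwind the definition of the pullback, substitute $(f.\phi)(\eta_i,x)=f(x)^{-1}\phi(\eta_i,x)$, and use the $G'$-invariance of $\psi$ to cancel the common factor $f(x)^{-1}$ inside the integrand.
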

\begin{proof}
The boundary map $f.\phi$ associated to $f.\sigma$ is given by
$$
f.\phi \colon G \slash Q \times X \rightarrow Y \ , \hspace{10pt} (f.\phi)(\eta, x) = f^{-1}(x) \phi(\eta, x) \ ,
$$
for almost every $\eta \in \, G \slash Q$ and $x \in \, X$ (Remark~\ref{oss:twisted:boundary:map}). Hence, we have
\begin{align*}
\upC^\bullet(f.\Phi^X)(\psi)(\eta_1, \ldots, \eta_{\bullet+1}) &= \int_X \psi((f.\phi)(\eta_1, x), \ldots, (f.\phi)(\eta_{\bullet +1}, x)) d\mu_X(x)= \\
&= \int_X \psi(f^{-1}(x) \phi(\eta_1, x), \ldots, f^{-1}(x)  \phi(\eta_{\bullet +1}, x)) d\mu_X(x)= \\
&= \int_X \psi( \phi(\eta_1, x), \ldots, \phi(\eta_{\bullet +1}, x)) d\mu_X(x)= \\
&=\upC^\bullet(\Phi^X)(\psi)(\eta_1, \ldots, \eta_{\bullet+1}) \ ,
\end{align*}
for almost every $\eta_1, \ldots, \eta_{\bullet +1 } \in \, G \slash Q$. This finishes the proof.
\end{proof}

\begin{oss}
Sometimes it is natural to consider the $G'$-module $\mathbb{R}$ with a twisted action. For instance if $G'$ admits a sign homomorphism, we can use it to twist the real coefficients. In that situation the previous equality will be true only up to a sign (see for instance~\cite[Proposition~13]{moraschini:savini}).
\end{oss}

\subsection{Pullback along  generalized boundary maps vs.~pullback of representations} \label{subsec:rep:coc}

In the situation of Setup~\ref{setup:mult:const}, let $(X,\mu_X)$ be a standard Borel probability $L$-space and let $\rho \colon L \rightarrow G'$ be a representation. 
Then, there exists an associated measurable cocycle $\sigma_\rho \colon L \times X \rightarrow G'$ defined by $\sigma_\rho(\gamma, x) = \rho(\gamma)$ for every $\gamma \in \, L$ and $x \in \, X$ (Definition~\ref{def:rep:cocycle}). 
If $\rho$ admits a $\rho$-equivariant measurable map $\varphi \colon G/Q \rightarrow Y$, 
the corresponding generalized boundary map of $\sigma_\rho$ is
$$
\phi \colon G/Q \times X \rightarrow Y, \hspace{5pt} \phi(\eta, x) = \varphi(\eta) \ ,
$$
for almost every $\eta \in \, G \slash Q$ and $x \in \, X$.

As explained by Burger and Iozzi~\cite{burger:articolo, BIuseful}, one can implement the pullback map 
$$\upH^\bullet_{cb}(\rho) \colon \upH^\bullet_{cb}(G'; \mathbb{R}) \rightarrow \upH^\bullet_{b}(L; \mathbb{R})$$ 
using a cochain map $\upC^\bullet(\varphi)$ defined by
$$
\upC^\bullet(\varphi):\calB^\infty(Y^{\bullet+1};\bbR)^{G'} \rightarrow \upL^\infty((G/Q)^{\bullet+1};\bbR)^L \ ,
$$
$$
\psi \mapsto \upC^\bullet(\varphi)(\psi)(\eta_1,\ldots,\eta_{\bullet+1}):=\psi(\varphi(\eta_1),\ldots,\varphi(\eta_{\bullet+1})) \ ,
$$
for almost every $\eta_1,\ldots,\eta_{\bullet+1} \in G/Q$. 

The following result shows that the pullback associated to $\rho$ via $\varphi$ agrees with the one along $\phi$. This property turns out to be fundamental to coherently extend the numerical invariants of representations to the ones of measurable cocycles (see~\cite[Proposition 3.4]{savini3:articolo} and~\cite[Propositions~12, Proposition~19]{moraschini:savini}).
\begin{prop}\label{prop:pullback:coc:vs:repr}
In the situation of Setup~\ref{setup:mult:const}, let $\rho \colon L \rightarrow G'$ be a representation which admits a $\rho$-equivariant measurable map $\varphi \colon G/Q \rightarrow Y$. Then, we have
$$
\upC^\bullet(\Phi^X)=\upC^\bullet(\varphi) \ .
$$
\end{prop}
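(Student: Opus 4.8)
The plan is to unwind both definitions and observe that the cocycle associated to a representation is constant in the $X$-variable, so that the integration over $X$ becomes vacuous. Concretely, I would start from the right-hand side $\upC^\bullet(\varphi)(\psi)$, which by definition sends $\psi \in \calB^\infty(Y^{\bullet+1};\bbR)^{G'}$ to the function $(\eta_1,\ldots,\eta_{\bullet+1}) \mapsto \psi(\varphi(\eta_1),\ldots,\varphi(\eta_{\bullet+1}))$ in $\upL^\infty((G/Q)^{\bullet+1};\bbR)^L$. On the left-hand side, I would expand $\upC^\bullet(\Phi^X) = \upI^\bullet_X \circ \upC^\bullet(\phi)$ using the explicit generalized boundary map $\phi(\eta,x) = \varphi(\eta)$ attached to $\sigma_\rho$, as recorded at the start of Section~\ref{subsec:rep:coc}.

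The key computation is then a direct substitution. Applying the $\upL^\infty(X)$-pullback $\upC^\bullet(\phi)$ from Definition~\ref{def:pullback:boundary} gives the $\upL^\infty(X)$-valued cochain
$$
\upC^\bullet(\phi)(\psi)(\eta_1,\ldots,\eta_{\bullet+1}) = \bigl(x \mapsto \psi(\phi(\eta_1,x),\ldots,\phi(\eta_{\bullet+1},x))\bigr) = \bigl(x \mapsto \psi(\varphi(\eta_1),\ldots,\varphi(\eta_{\bullet+1}))\bigr) \ ,
$$
where the second equality uses $\phi(\eta_i,x)=\varphi(\eta_i)$. The crucial observation is that the resulting function of $x$ is \emph{constant}, since $\psi(\varphi(\eta_1),\ldots,\varphi(\eta_{\bullet+1}))$ does not depend on $x$. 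Applying the integration map $\upI^\bullet_X$ from Definition~\ref{def:integration:map} then yields
$$
\upC^\bullet(\Phi^X)(\psi)(\eta_1,\ldots,\eta_{\bullet+1}) = \int_X \psi(\varphi(\eta_1),\ldots,\varphi(\eta_{\bullet+1}))\, d\mu_X(x) = \psi(\varphi(\eta_1),\ldots,\varphi(\eta_{\bullet+1})) \ ,
$$
where the last equality holds because $\mu_X$ is a probability measure, so integrating a constant returns that constant. This is exactly $\upC^\bullet(\varphi)(\psi)(\eta_1,\ldots,\eta_{\bullet+1})$, establishing the equality of the two cochain maps.

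There is essentially no serious obstacle here: the statement is a formal consequence of the constancy of $\sigma_\rho$ (and hence of $\phi$) in the $X$-direction, combined with the normalization $\mu_X(X)=1$. The only points requiring a word of care are that all equalities are understood for almost every $(\eta_1,\ldots,\eta_{\bullet+1}) \in (G/Q)^{\bullet+1}$, and that both sides genuinely land in $\upL^\infty((G/Q)^{\bullet+1};\bbR)^L$ — but the target is already built into the codomains of the two maps, and the $L$-invariance was verified in Lemma~\ref{lemma:pullback:cochain} and Lemma~\ref{lemma:int:cochain:map} (for $\upC^\bullet(\Phi^X)$) and is classical for $\upC^\bullet(\varphi)$. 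Thus the proof reduces to the short substitution above, and no measure-theoretic subtlety beyond the probability normalization enters.
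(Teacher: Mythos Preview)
Your proof is correct and follows essentially the same approach as the paper: both arguments rest on the single observation that the boundary map $\phi$ associated to $\sigma_\rho$ is constant in the $X$-variable, so that the integration $\upI^\bullet_X$ collapses and $\upC^\bullet(\Phi^X)=\upI^\bullet_X\circ\upC^\bullet(\phi)$ coincides with $\upC^\bullet(\varphi)$. The paper phrases this as the commutativity of the triangle with vertices $\calB^\infty(Y^{\bullet+1};\bbR)^{G'}$, $\upL^\infty_{\textup{w}^*}((G/Q)^{\bullet+1};\upL^\infty(X))^L$, and $\upL^\infty((G/Q)^{\bullet+1};\bbR)^L$, while you write out the substitution explicitly; there is no real difference in content.
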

\begin{proof}
Since the boundary map $\phi$ associated to $\sigma_\rho$ does not depend on the second variable, it is immediate to check that the following diagram commutes
$$
\xymatrix{
\mathcal{B}^\infty(Y^{\bullet + 1}; \mathbb{R})^{G'} \ar[rr]^-{\upC^\bullet(\phi)} \ar[rd]_-{\upC^\bullet(\varphi)} && \textup{L}_{\text{w}^*}^\infty((G \slash Q)^{\bullet+1}; \upL^\infty(X))^L  \ar[ld]^-{\upI_X^\bullet} \\
& \textup{L}^\infty((G \slash Q)^{\bullet+1}; \mathbb{R})^L \ , } 
$$
whence the thesis.
\end{proof}

\begin{oss}
The existence of a cocycle of the form $\sigma \colon L \times X \rightarrow G'$ required in Setup~\ref{setup:mult:const} is irrelevant in the previous result.
\end{oss}

\subsection{Multiplicative formula}\label{subsec:easy:mult:formula}

In this section we prove how to deduce the multiplicative formula stated in Proposition~\ref{prop:baby:formula}. Some applications of the formula are then discussed in  Section~\ref{subsec:applications:baby}.

\begin{repprop}{prop:baby:formula}
In the situation of Setup~\ref{setup:mult:const}, let $\psi' \in \calB^\infty(Y^{\bullet+1};\bbR)^{G'}$ be an everywhere-defined $G'$-invariant cocycle. Let $\psi \in \upL^\infty((G/Q)^{\bullet+1})^G$ be a $G$-invariant cocycle. Denote by $\Psi \in \upH^\bullet_{cb}(G;\bbR)$ the class of $\psi$. Assume that $\Psi=\textup{trans}_{G/Q}^{\bullet} [\upC^\bullet(\Phi^X)(\psi')]$. 
\begin{enumerate}
	\item We have that
$$
\int_{L \backslash G} \int_X \psi'(\phi(\overline{g}.\eta_1, x), \ldots, \phi(\overline{g}.\eta_{\bullet+1}, x)) d\mu_X(x) d\mu(\overline{g}) = \psi(\eta_1, \ldots, \eta_{\bullet+1}) +  \textup{cobound.} \ ,
$$
for almost every $(\eta_1,\ldots,\eta_{\bullet+1}) \in (G/Q)^{\bullet+1}$.
	\item If $\upH^\bullet_{cb}(G;\bbR) \cong \bbR \Psi  (= \bbR[\psi])$, then there exists a real constant $\lambda_{\psi',\psi}(\sigma) \in \bbR$ depending on $\sigma,\psi',\psi$ such that
\begin{align*}
\int_{L\backslash G} \int_X \psi'(\phi(\overline{g}.\eta_1, x), \ldots, \phi(\overline{g}.\eta_{\bullet+1}, x)) d\mu_X(x) d\mu(\overline{g})&=\lambda_{\psi',\psi}(\sigma) \cdot \psi(\eta_1,\ldots,\eta_{\bullet+1}) \\ &+\textup{cobound.} \ ,
\end{align*}
for almost every $(\eta_1,\ldots,\eta_{\bullet+1}) \in (G/Q)^{\bullet+1}$.
\end{enumerate}
\end{repprop} 
\begin{proof}
\emph{Ad~1.} Since Setup~\ref{setup:mult:const} ensures the existence of the transfer map $\trans_{G \slash Q}^\bullet$, the first formula is easily true.

\emph{Ad~2.} Since $\upH^\bullet_{cb}(G;\bbR)$ is one-dimensional and generated by $\Psi = [\psi]$ as an $\mathbb{R}$-vector space, $\textup{trans}_{G/Q}^{\bullet} [\upC^\bullet(\Phi^X)(\psi')]$ must be a real multiple of $\Psi$. This finishes the proof.
\end{proof}

\begin{oss}\label{oss:evaluate:everywhere:formula}
A priori Proposition~\ref{prop:baby:formula}.2 only holds almost everywhere. However, as proved by Monod~\cite[Section~1.C]{monod:lifting}, working with $\textup{L}^\infty$-cocycles on Furstenberg-Poisson boundaries one can always show that the previous formula holds everywhere~\cite[Theorem~B]{monod:lifting} 
(compare with~\cite[Section~4]{bucher2:articolo}). 
We will use this fact in the proof of Theorem~\ref{teor:coniugato:standard:embedding} in order to evaluate the formula at a given point.
\end{oss}

\subsection{Multiplicative constants and maximal measurable cocycles} \label{subsec:multiplicative:constant}

In this section we are going to introduce the notion of \emph{multiplicative constant}. This definition will allow us to introduce \emph{maximal (measurable) cocycles} and to investigate their rigidity properties.

In the situation of Setup~\ref{setup:mult:const}, let $\psi' \in \calB^\infty(Y^{\bullet+1};\bbR)^{G'}$ and let $\Psi=[\psi] \in \upH^\bullet_{cb}(G;\bbR)$ be represented by a bounded Borel cocycle $\psi \colon (G/Q)^{\bullet+1} \rightarrow \bbR$. If $\upH^\bullet_{cb}(G;\bbR)=\bbR \Psi$, then Proposition~\ref{prop:baby:formula} implies
\begin{align}\label{equation:easy:formula}
\int_{L \backslash G} \int_X \psi'(\phi(\overline{g}.\eta_1,x),&\ldots,\phi(\overline{g}.\eta_{\bullet+1},x))d\mu_X(x)d\mu(\overline{g})=\\
&=\lambda_{\psi',\psi}(\sigma)\psi(\eta_1,\ldots,\eta_{\bullet+1}) + \textup{cobound.} \nonumber \ .
\end{align}

\begin{deft}\label{def:multiplicative:constant}
The real number $\lambda_{\psi',\psi}(\sigma) \in \bbR$ appearing in Equation (\ref{equation:easy:formula}) is the \emph{multiplicative constant associated to} $\sigma, \psi', \psi$. 
\end{deft}

A particularly nice situation for the study of rigidity phenomena is when in Equation~(\ref{equation:easy:formula}) there are no coboundary terms. For this reason we are going to introduce the following notation.
\begin{deft}
We say that \emph{condition} $(\NCT)$ (no coboundary terms) is satisfied when Equation~(\ref{equation:easy:formula}) reduces to
\begin{align*}
\int_{L \backslash G} \int_X \psi'(\phi(\overline{g}.\eta_1,x),&\ldots,\phi(\overline{g}.\eta_{\bullet+1},x))d\mu_X(x)d\mu(\overline{g})=\\
=&\lambda_{\psi',\psi}(\sigma)\psi(\eta_1,\ldots,\eta_{\bullet+1}) \nonumber \ .
\end{align*}
\end{deft}
\begin{es}\label{es:when:NCT}
Standard examples in which condition $(\NCT)$ is satisfied are the followings:
\begin{enumerate}
\item Given a torsion-free lattice $L \leq G$ in a semisimple Lie group and a minimal parabolic subgroup $P \leq G$, $L$ acts doubly ergodically on the Furstenberg-Poisson boundary $G/P$~\cite[Theorem 5.6]{albuquerque99}. Hence condition $(\NCT)$ is satisfied in degree $n=2$ for bounded alternating cochains.

\item In degree $n \geq 3$, if $G=\textup{PO}(n,1)$ and $G/Q=\bbS^{n-1}$, condition $(\NCT )$ holds when we consider the real bounded cohomology twisted by the sign action \cite[Lemma 2.2]{bucher2:articolo}.
\end{enumerate}
\end{es}

\begin{oss}\label{oss:NCT}
The condition $(\NCT)$ has the following equivalent reformulation via cochains
$$
\widehat{\textup{trans}}^\bullet_{G/Q} \circ \upC^\bullet(\Phi^X)(\psi')=\lambda_{\psi',\psi}(\sigma)\psi \ . 
$$
\end{oss}

If condition $(\NCT)$ is satisfied, then there exists an explicit upper bound for the multiplicative constant $\lambda_{\psi',\psi}(\sigma)$. 

\begin{prop}\label{prop:multiplicative:upperbound}
In the situation of Setup~\ref{setup:mult:const}, let $\psi' \in \calB^\infty(Y^{\bullet+1};\bbR)^{G'}$ and let $\Psi=[\psi] \in \upH^\bullet_{cb}(G;\bbR)$ be represented by a bounded Borel cocycle $\psi \colon (G/Q)^{\bullet+1} \rightarrow \bbR$. If condition $(\textup{NCT})$ is satisfied, then we have 
$$
|\lambda_{\psi',\psi}(\sigma)| \leq \frac{\lVert \psi' \rVert_\infty}{\lVert \psi \rVert_\infty} \ .
$$
\end{prop}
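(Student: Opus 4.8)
The plan is to read off the inequality directly from the cochain-level reformulation of condition $(\NCT)$ given in Remark~\ref{oss:NCT}, exploiting that both maps composed on the left-hand side are norm non-increasing. Concretely, I would start from the identity
$$
\widehat{\textup{trans}}^\bullet_{G/Q} \circ \upC^\bullet(\Phi^X)(\psi')=\lambda_{\psi',\psi}(\sigma)\,\psi
$$
and estimate the $\upL^\infty$-norm of both sides.

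First I would compute the norm of the right-hand side. Since the norm on cochains is homogeneous, we have $\lVert \lambda_{\psi',\psi}(\sigma)\,\psi \rVert_\infty = |\lambda_{\psi',\psi}(\sigma)|\cdot\lVert \psi \rVert_\infty$. Next I would bound the norm of the left-hand side from above. The pullback map $\upC^\bullet(\Phi^X)$ is norm non-increasing by Proposition~\ref{prop:pullback:cohomology}, so $\lVert \upC^\bullet(\Phi^X)(\psi') \rVert_\infty \leq \lVert \psi' \rVert_\infty$. It then remains to observe that the transfer cochain map $\widehat{\textup{trans}}^\bullet_{G/Q}$ is itself norm non-increasing: it is defined by integrating a cochain against the $G$-invariant \emph{probability} measure $\mu$ on $L \backslash G$, so for every $f \in \upL^\infty((G/Q)^{\bullet+1};\bbR)^L$ and almost every $(\xi_1,\ldots,\xi_{\bullet+1})$ one has
$$
\Big| \int_{L \backslash G} f(\overline{g}.\xi_1,\ldots,\overline{g}.\xi_{\bullet+1})\, d\mu(\overline{g}) \Big| \leq \int_{L \backslash G} \lVert f \rVert_\infty\, d\mu(\overline{g}) = \lVert f \rVert_\infty,
$$
using $\mu(L\backslash G)=1$. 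Chaining these two estimates yields
$$
|\lambda_{\psi',\psi}(\sigma)|\cdot\lVert \psi \rVert_\infty = \lVert \widehat{\textup{trans}}^\bullet_{G/Q} \circ \upC^\bullet(\Phi^X)(\psi') \rVert_\infty \leq \lVert \upC^\bullet(\Phi^X)(\psi') \rVert_\infty \leq \lVert \psi' \rVert_\infty,
$$
and dividing by $\lVert \psi \rVert_\infty$ gives the claimed bound.

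The only genuine point requiring care — rather than a true obstacle — is the division by $\lVert \psi \rVert_\infty$, which is legitimate precisely because $\Psi = [\psi]$ generates $\upH^\bullet_{cb}(G;\bbR) \cong \bbR\Psi$ and is in particular nonzero, forcing $\lVert \psi \rVert_\infty > 0$; if $\psi$ were null the multiplicative constant would not be well-defined in the first place. I expect no further difficulty, since the norm non-increasing property of the pullback is already established earlier and the corresponding property of the transfer map follows from the elementary averaging estimate above. I would also note that the entire argument restricts verbatim to the subcomplexes of alternating cochains, which is the form actually used in the applications.
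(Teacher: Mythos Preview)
Your proof is correct and follows essentially the same line as the paper: start from the cochain identity in Remark~\ref{oss:NCT}, use that both $\widehat{\textup{trans}}^\bullet_{G/Q}$ and $\upC^\bullet(\Phi^X)$ are norm non-increasing, and divide through by $\lVert \psi \rVert_\infty$. Your explicit averaging estimate for $\widehat{\textup{trans}}^\bullet_{G/Q}$ and your remark on $\lVert \psi \rVert_\infty > 0$ are small clarifications the paper leaves implicit, but the argument is otherwise identical.
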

\begin{proof}
By Remark~\ref{oss:NCT} we know that 
$$
\widehat{\textup{trans}}^\bullet_{G/Q} \circ \upC^\bullet(\Phi^X)(\psi')=\lambda_{\psi',\psi}(\sigma)\psi \ . 
$$
Since by Proposition \ref{prop:pullback:cohomology} $\widehat{\textup{trans}}^\bullet_{G/Q}$ and $\upC^\bullet(\Phi^X)$ are norm non-increasing maps, the left-hand side admits the following estimate
$$
\lVert \widehat{\textup{trans}}^\bullet_{G/Q} \circ \upC^\bullet(\Phi^X)(\psi') \rVert_\infty \leq \lVert \psi' \rVert_\infty \  .
$$
Hence, we get
$$
|\lambda_{\psi',\psi}(\sigma)| \lVert \psi \rVert_\infty \leq \lVert \psi' \rVert_\infty \ , 
$$
as desired. 
\end{proof}

Using the previous upper bound, we introduce the following

\begin{deft}\label{def:maximal:cocycle}
In the situation of Setup~\ref{setup:mult:const} assume that condition $(\NCT)$ is satisfied. We say that a measurable cocycle $\sigma \colon L \times X \rightarrow G'$ is \emph{maximal} if its multiplicative constant $\lambda_{\psi',\psi}(\sigma)$ attains the maximum value:
$$
\lambda_{\psi',\psi}(\sigma) = \frac{\lVert \psi' \rVert_\infty}{\lVert \psi \rVert_\infty} \ . 
$$
\end{deft}

For every representation $\pi \colon G \to G'$, we denote the restriction of $\pi$ to $L$ as $\pi |_L \colon L \to G'$. We prove now that under suitable assumptions maximal cocycles can be trivialized, i.e. they are cohomologous to a suitable representation $\pi |_L \colon L \rightarrow G'$. 

\begin{setup}\label{setup:complete:mult:const}
In the situation of Setup~\ref{setup:mult:const} assume that condition $(\textup{NCT})$ is satisfied. We also assume that
\begin{itemize} 
	\item Both $\psi'$ and $\psi$ are defined everywhere and they attain their essential supremum: 
	There exist $\eta_1,\ldots,\eta_{\bullet+1} \in G/Q$ and $y_1,\ldots,y_{\bullet+1} \in Y$ such that
$$
\psi'(y_1,\ldots,y_{\bullet+1})=\lVert \psi' \rVert_\infty \hspace{5pt} \mbox{ and } \hspace{5pt} \psi(\eta_1,\ldots,\eta_{\bullet+1})=\lVert \psi \rVert_\infty \ . 
$$
\item A \emph{maximal} map $\varphi \colon G/Q \rightarrow Y$ is a measurable map such that
$$
\psi'(\varphi(g\eta_1),\ldots,\varphi(g\eta_{\bullet+1}))=\lVert \psi' \rVert_\infty \ , 
$$
for almost every $g \in G$ and for every $\eta_1,\ldots,\eta_{\bullet+1} \in G/Q$ such that $$\psi(\eta_1,\ldots,\eta_{\bullet+1})=\lVert \psi \rVert_\infty \ .$$

\item There exists a continuous representation $\pi \colon G \rightarrow G'$ and unique continuous $\pi$-equivariant map $\Pi \colon G/Q \rightarrow Y$ which satisfies the following: Given any \emph{maximal} measurable map $\varphi \colon G/Q \rightarrow Y$, there exists a unique element $g_\varphi ' \in G'$ such that
$$
\varphi(\eta)=g_\varphi '\Pi(\eta) \ , 
$$
for almost every $\eta \in G/Q$. 

\item The $G'$-pointwise stabilizer of the map $\Pi$ is trivial, i.e. the only element $g' \in G'$ such that $g'  \Pi (x) = \Pi(x)$ for all $x \in \, G \slash Q$ is the neutral element of $G'$. We denote the previous stabilizer by $\textup{Stab}_{G'}(\Pi)$. 
\end{itemize}
\end{setup}

\begin{teor}\label{teor:coniugato:standard:embedding}
In the situation of Setup~\ref{setup:complete:mult:const} let $\pi |_L \colon L \to G'$ be the restriction of the representation $\pi \colon G \to G'$ to $L$. If the measurable cocycle $\sigma \colon L \times X \rightarrow G'$ is maximal, then $\sigma$ is cohomologous to $\pi |_L$. 
\end{teor}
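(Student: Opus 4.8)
The plan is to use maximality to force the slices $\phi(\cdot, x)$ of the boundary map to be maximal maps for almost every $x$, then to rigidify each slice against the model map $\Pi$ and read the trivializing function off the $\sigma$-equivariance relation. First I would rewrite the hypotheses at the cochain level. Since condition $(\NCT)$ holds and $\sigma$ is maximal, Remark~\ref{oss:NCT} together with Definition~\ref{def:maximal:cocycle} gives
$$
\widehat{\textup{trans}}^\bullet_{G/Q}\circ \upC^\bullet(\Phi^X)(\psi') = \frac{\lVert\psi'\rVert_\infty}{\lVert\psi\rVert_\infty}\,\psi \ .
$$
A priori this is an identity in $\upL^\infty$, but by Remark~\ref{oss:evaluate:everywhere:formula} I may choose everywhere-defined representatives and evaluate pointwise at the distinguished tuple $(\eta_1,\ldots,\eta_{\bullet+1})$ of Setup~\ref{setup:complete:mult:const} at which $\psi(\eta_1,\ldots,\eta_{\bullet+1}) = \lVert\psi\rVert_\infty$. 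The right-hand side then equals $\lVert\psi'\rVert_\infty$, so that
$$
\int_{L\backslash G}\int_X \psi'(\phi(\overline{g}.\eta_1, x),\ldots,\phi(\overline{g}.\eta_{\bullet+1},x))\,d\mu_X(x)\,d\mu(\overline{g}) = \lVert\psi'\rVert_\infty \ .
$$

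The heart of the argument is an extremality step. Both $\mu$ on $L\backslash G$ and $\mu_X$ on $X$ are probability measures, while the integrand is everywhere bounded above by $\lVert\psi'\rVert_\infty$; hence an average equal to $\lVert\psi'\rVert_\infty$ forces the integrand to equal $\lVert\psi'\rVert_\infty$ almost everywhere. By Fubini, for almost every $x\in X$ one has $\psi'(\phi(g.\eta_1, x),\ldots,\phi(g.\eta_{\bullet+1},x)) = \lVert\psi'\rVert_\infty$ for almost every $g\in G$. Since $\psi$ is $G$-invariant, the same holds after replacing $(\eta_1,\ldots,\eta_{\bullet+1})$ by any point of its $G$-orbit (absorbing the translation into the averaging over $g$), so that the slice $\varphi_x \coloneqq \phi(\cdot, x)$ is a \emph{maximal} map in the sense of Setup~\ref{setup:complete:mult:const} for almost every $x$.

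I would then invoke the rigidity built into the Setup. For each such $x$, the third bullet of Setup~\ref{setup:complete:mult:const} produces a unique $f(x)\in G'$ with $\phi(\eta, x) = f(x)\,\Pi(\eta)$ for almost every $\eta\in G/Q$; uniqueness together with a measurable selection argument makes $x\mapsto f(x)$ measurable. Substituting into the $\sigma$-equivariance relation $\phi(\gamma.\eta,\gamma.x) = \sigma(\gamma,x)\phi(\eta,x)$ and using the $\pi$-equivariance $\Pi(\gamma.\eta) = \pi(\gamma)\Pi(\eta)$, I obtain $f(\gamma.x)\pi(\gamma)\Pi(\eta) = \sigma(\gamma,x)f(x)\Pi(\eta)$ for almost every $\eta$, whence $[\sigma(\gamma,x)f(x)]^{-1}f(\gamma.x)\pi(\gamma)\in\Stab_{G'}(\Pi)$. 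As this stabilizer is trivial (fourth bullet), I conclude $\sigma(\gamma,x) = f(\gamma.x)\pi(\gamma)f(x)^{-1}$, that is $\sigma = (f^{-1}).\sigma_{\pi|_L}$ in the sense of Definition~\ref{def:cohomology:cocycle} (with trivializing map $x\mapsto f(x)^{-1}$), so $\sigma$ is cohomologous to $\pi|_L$.

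I expect the delicate points to be twofold. The first is the passage from "average equals maximum" to pointwise maximality of every slice $\varphi_x$, where one must match the single distinguished tuple against the quantifier over \emph{all} maximizing tuples appearing in the definition of maximal map; this is controlled by the $G$-invariance of $\psi$ and the averaging over $g\in G$. The second is the measurability of the trivializing function $f$, which rests entirely on the uniqueness clause of Setup~\ref{setup:complete:mult:const} combined with a measurable selection theorem. Once these are in place, the conjugation identity and the triviality of $\Stab_{G'}(\Pi)$ make the conclusion immediate.
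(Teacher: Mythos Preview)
Your proposal is correct and follows essentially the same route as the paper: maximality plus $(\NCT)$ gives an integral equal to $\lVert\psi'\rVert_\infty$, extremality forces the slices $\phi_x$ to be maximal maps, the rigidity hypothesis of Setup~\ref{setup:complete:mult:const} produces the element $f(x)\in G'$, and the $\sigma$-equivariance of $\phi$ together with the triviality of $\Stab_{G'}(\Pi)$ yields the cohomology relation. The only cosmetic difference is in the measurability of $f$: the paper obtains it by noting that $x\mapsto\phi_x$ lands in the orbit $G'.\Pi$, which is homeomorphic to $G'$ since the stabilizer is trivial, whereas you appeal to uniqueness plus a measurable selection; both are fine.
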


\begin{oss}
More precisely, the theorem shows the existence of a measurable map $f \colon X \rightarrow G'$ such that: For all $\gamma \in L$ and almost every $x \in X$, we have
$$
\pi |_L (\gamma)=f(\gamma.x)^{-1}\sigma(\gamma,x)f(x) \ .
$$ 
\end{oss}

\begin{proof}
Since the cocycle $\sigma$ is maximal, we know that
$$
\lambda_{\psi',\psi}(\sigma) = \frac{\lVert \psi' \rVert_\infty}{\lVert \psi \rVert_\infty} \ . 
$$
Under condition $(\NCT)$, if we substitute the value of $\lambda_{\psi',\psi}(\sigma)$ in Equation (\ref{equation:easy:formula}) we get
\begin{small}
\begin{equation}\label{equation:maximal:nct:substitution}
\int_{L \backslash G} \int_X \psi'(\phi(\overline{g}.\eta_1,x),\ldots,\phi(\overline{g}.\eta_{\bullet+1},x))d\mu_X(x)d\mu(\overline{g})=\frac{\lVert \psi' \rVert_\infty}{\lVert \psi \rVert_\infty}\psi(\eta_1,\ldots,\eta_{\bullet+1})\ .
\end{equation}
\end{small}
Moreover, by assumption $\psi$ attains its essential supremum, whence there exist $\hat{\eta}_1,\ldots,\hat{\eta}_{\bullet+1} \in G/Q$ such that 
\begin{equation}\label{equation:maximum:attain}
\psi(\hat{\eta}_1,\ldots,\hat{\eta}_{\bullet+1})=\lVert \psi \rVert_\infty \ .
\end{equation} 
By Remark~\ref{oss:evaluate:everywhere:formula} we can evaluate Equation \eqref{equation:maximal:nct:substitution} at $\hat{\eta}_1,\ldots,\hat{\eta}_{\bullet+1} \in G/Q$. Hence, by Equation~(\ref{equation:maximum:attain}), we have 
\begin{equation}\label{equation:maximal:integral}
\int_{L \backslash G} \int_X \psi'(\phi(\overline{g}.\hat{\eta}_1,x),\ldots,\phi(\overline{g}.\hat{\eta}_{\bullet+1},x))d\mu_X(x)d\mu(\overline{g})=\lVert \psi' \rVert_\infty \ .
\end{equation}
This shows that
$$
\psi'(\phi(\overline{g}.\hat{\eta}_1,x),\ldots,\phi(\overline{g}.\hat{\eta}_{\bullet+1},x))=\lVert \psi' \rVert_\infty \  ,
$$
for almost every $\overline{g} \in L \backslash G$ and almost every $x \in X$. Additionally, the $\sigma$-equivariance of $\phi$ implies that in fact 
\begin{equation}\label{equation:maximal:map}
\psi'(\phi(g.\hat{\eta}_1,x),\ldots,\phi(g.\hat{\eta}_{\bullet+1},x))=\lVert \psi' \rVert_\infty 
\end{equation}
holds for almost every $g \in G$ and almost every $x \in X$. 

We can define for almost every $x \in \, X$ a map $$\phi_x \colon G/Q \rightarrow Y, \hspace{5pt} \phi_x(\eta)\coloneqq\phi(\eta,x) \ ,$$
which is measurable~\cite[Lemma 2.6]{fisher:morris:whyte} and maximal by Equation \eqref{equation:maximal:map}. Hence, by the assumptions of Setup~\ref{setup:complete:mult:const}, for almost every $x \in X$ there must exist an element $g_x \in G'$ such that 
$$
\phi_x(\eta)=g_x\Pi(\eta) \ ,
$$
for almost every $\eta \in G/Q$. This shows that $\phi_x$ lies in the $G'$-orbit of $\Pi$. In this way we get a map 
$$
\widehat{\phi} \colon X \rightarrow G'.\Pi, \ \ \ \widehat{\phi}(x)=\phi_x \ ,
$$
which is measurable~\cite[Lemma 2.6]{fisher:morris:whyte}. By Setup~\ref{setup:complete:mult:const} the stabilizer of $\Pi$ is trivial and hence the orbit $G'.\Pi$ is naturally homeomorphic to $G'$ through a map $\jmath \colon G'.\Pi \rightarrow G'$. Composing the identification $\jmath$ with the map $\widehat{\phi}$ we get a map 
$$
f \colon X \rightarrow G', \ \ \ f(x)\coloneqq (\jmath \circ \widehat{\phi})(x) \ , 
$$ 
which is defined almost everywhere and it is measurable being the composition of measurable maps (notice that the composition above gives back the element $g_x$).

We can now conclude the proof (compare with~\cite[Proposition 3.2]{sauer:articolo}). Given $\gamma \in L$, on the one hand we have
$$
\phi(\gamma.\eta,\gamma.x)=\sigma(\gamma,x)\phi(\eta,x)=\sigma(\gamma,x)f(x)\Pi(\eta) \ ,
$$
and on the other
$$
\phi(\gamma.\eta,\gamma.x)=f(\gamma.x)\Pi(\gamma.x)=f(\gamma.x)\pi |_L (\gamma)\Pi(\eta) \ .
$$
In the second equality we used the $\pi$-equivariance of the map $\Pi$. The fact that $\textup{Stab}_{G'}(\Pi)$ is trivial implies that
$$
\pi |_L (\gamma)=f(\gamma.x)^{-1}\sigma(\gamma,x)f(x) \ ,
$$
which finishes the proof.
\end{proof}

\subsection{Applications of the multiplicative formula}\label{subsec:applications:baby}

For convenience of the reader we collect here some examples of applications of Proposition~\ref{prop:baby:formula}. 

\begin{es}
Let $n \geq 3$. Let $L \leq G = \po^\circ(n, 1)$ be a torsion-free non-uniform lattice and $(X, \mu_X)$ be a standard Borel probability $L$-space. Following the notation of Setup~\ref{setup:mult:const}, we set $G' = \po^\circ(n, 1)$ and 
$Y = G / Q = \partial \mathbb{H}^n_\mathbb{R} \cong \mathbb{S}^{n-1}$, where $Q$ is a (minimal) parabolic subgroup of $G$.
Using bounded cohomology theory~\cite{Grom82, FM:Grom}, one can define the \emph{volume} $\vol(\sigma)$ of a measurable cocycle $\sigma \colon L \times X \to \po^\circ(n,1)$~\cite[Section~4.1]{moraschini:savini}. As proved by the authors~\cite[Proposition~2]{moraschini:savini}, in this setting the multiplicative constant is given by 
$$
\lambda_{\psi', \psi}(\sigma) = \frac{\vol(\sigma)}{\vol(L \backslash \mathbb{H}^n)} \ .
$$
Since condition $(\NCT)$ is satisfied for twisted real coefficients~\cite[Lemma~2.2]{bucher2:articolo}, Proposition~\ref{prop:multiplicative:upperbound} shows that the following Milnor-Wood inequality holds~\cite[Proposition~15]{moraschini:savini}
$$
| \vol(\sigma) | \leq \vol(L \backslash \mathbb{H}^n) \ . 
$$
Finally one can apply Theorem~\ref{teor:coniugato:standard:embedding} to show that if $\sigma$ is maximal, then $\sigma$ is cohomologous to the cocycle associated to the standard lattice embedding $L \rightarrow G$. In fact, one can strengthen this result: A cocycle is maximal \emph{if and only if} it is cohomologous to the cocycle associated to the standard lattice embedding~\cite[Theorem~1]{moraschini:savini}.

Similarly, one can apply an analogous strategy for studying the case of closed surfaces. The main difference is that we have to to fix a hyperbolization. Then,
maximal cocycles will be cohomologous to the given hyperbolization~\cite[Theorem~5]{moraschini:savini}
\end{es}

\begin{es}
Fix a torsion-free lattice  $L \leq G = \textup{PSL}(2,\mathbb{C})$ together with a standard Borel probability $L$-space $(X, \mu_X)$. Following the notation of Setup~\ref{setup:mult:const}, we set $G' = \textup{PSL}(n,\mathbb{C})$, $Y=\mathscr{F}(n,\mathbb{C})$ is the space of full flags, and $G / Q = \mathbb{P}^1(\mathbb{C})$. 
Here $Q$ is a (minimal) parabolic subgroup of $G$.
The second author defined the \emph{Borel invariant} $\beta_n(\sigma)$ of a measurable cocycle $\sigma \colon L \times X \to \textup{PSL}(n,\mathbb{C})$~\cite[Section~4]{savini3:articolo}. Then,  the multiplicative constant is given by~\cite[Proposition 1.2]{savini3:articolo}
$$
\lambda_{\psi', \psi}(\sigma) = \frac{\beta_n(\sigma)}{\vol(L \backslash \mathbb{H}^3)} \ .
$$
Since condition $(\NCT)$ is satisfied, Proposition~\ref{prop:multiplicative:upperbound} leads to the following Milnor-Wood inequality~\cite[Proposition~4.5]{savini3:articolo}
$$
| \beta_n(\sigma) | \leq {n+1 \choose 3}\vol(L \backslash \mathbb{H}^3) \ . 
$$
Finally, one can apply Theorem~\ref{teor:coniugato:standard:embedding} to show that if $\sigma$ is maximal, then $\sigma$ is cohomologous to the cocycle associated to the standard lattice embedding $L \rightarrow G$ composed with the irreducible representation $\pi_n:\textup{PSL}(2,\mathbb{C}) \rightarrow \textup{PSL}(n,\mathbb{C})$. In fact also the converse holds true~\cite[Theorem~1.1]{savini3:articolo}.
\end{es}

\section{Cartan invariant of measurable cocycles of complex hyperbolic lattices}\label{sec:cartan:invariant}

Let $\Gamma \leq \pu(n,1)$ be a torsion-free lattice with $n \geq 2$ and let $(X,\mu_X)$ be a standard Borel probability $\Gamma$-space. In this section we are going to define the \emph{Cartan invariant} $i(\sigma)$ associated to a measurable cocycle $\sigma \colon \Gamma \times X \rightarrow \pu(m,1)$. Then, when $\sigma$ is non elementary, we will express the Cartan invariant as a multiplicative constant (Proposition \ref{prop:cartan:multiplicative:cochains}). This interpretation allows us to deduce many properties of the Cartan invariant for non-elementary measurable cocycles. 

We recall here just few notions of complex hyperbolic geometry that we will need in the sequel. We refer the reader to Goldman's book~\cite{Goldmancomplex}
for a complete discussion about this topic. Let $\mathbb{H}^n_{\mathbb{C}}$ be the complex hyperbolic space. For every $k \in \{ 0, \ldots, n\}$ a \emph{$k$-plane} is a totally geodesic copy of $\bbH^{k}_{\bbC}$ holomorphically embedded in $\bbH^n_{\bbC}$. 
When $k=1$, a $1$-plane is simply a \emph{complex geodesic}. Similarly, a \emph{$k$-chain} is the boundary of a $k$-plane in $\partial_\infty \bbH^n_{\bbC}$, i.e. it is an embedded copy of $\partial_\infty \bbH^k_{\bbC}$. When $k = 1$, we will just call them \emph{chains}. Since a chain is completely determined by any two of its points, two distinct chains are either disjoint or they meet exactly in one point. 

Let us consider the Hermitian triple product 
$$
\langle \cdot, \cdot, \cdot \rangle \colon (\bbC^{n,1})^3 \rightarrow \bbC, \hspace{5pt} \langle z_1, z_2, z_3 \rangle \coloneqq h(z_1,z_2)h(z_2,z_3)h(z_3,z_1) \ .
$$
If we denote by $(\partial_\infty \bbH^n_{\bbC})^{(3)}$ the set of triples of distinct points in the boundary at infinity, we can defined the following function
$$
c_n \colon (\partial_\infty \bbH^n_{\bbC})^{(3)} \rightarrow [-1,1], \hspace{5pt} c_n(\xi_1,\xi_2,\xi_3) \coloneqq \frac{2}{\pi} \arg \langle z_1,z_2,z_3 \rangle \ .
$$
Here $\xi_i=[z_i]$ and we choose the branch of the argument function such that $\arg(z) \in [-\pi/2,\pi/2]$. 
Then, we can extend $c_n$ to a $\pu(n,1)$-invariant alternating Borel cocycle on the whole $(\partial_\infty \bbH^n_{\bbC})^3$. 
Moreover, $|c_n(\xi_1,\xi_2,\xi_3)|=1$ if and only if $\xi_1,\xi_2,\xi_3 \in \partial_\infty \bbH^n_{\bbC}$ are distinct and they lie on the same chain \cite[Section 3]{BIW09}. 

\begin{deft}
The cocycle 
$$
c_n \in \calB^\infty_{\textup{alt}}((\partial_\infty \bbH^n_{\bbC})^3;\bbR)^{\pu(n,1)} 
$$
is called \emph{Cartan cocycle}.
\end{deft}
\begin{oss}\label{oss:cartan:cocycle:det:class}
The Cartan cocycle $c_n$ canonically determines a class in $\upH^2_{cb}(\pu(n,1);\bbR)$ via the map defined in Equation~(\ref{eq:canonical:map:B:L}).
\end{oss}

Let $\omega_n \in \Omega^2(\bbH^n_{\bbC})$ be the K\"ahler form, which is a $\pu(n,1)$-invariant $2$-form. 
By the Van Est isomorphism~\cite[Corollary~7.2]{guichardet} the space $\Omega^2(\bbH^n_{\bbC})^{\pu(n,1)}$ is isomorphic to $\upH^2_c(\pu(n,1);\bbR)$. We call \emph{K\"ahler class} the element $\kappa_n \in \, \upH^2_c(\pu(n,1);\bbR)$ corresponding to $\omega_n$ via the previous isomorphism. Since the K\"ahler class is bounded, $\kappa_n$ lies in the image of the comparison map 
$$
\textup{comp}^2 \colon \upH^2_{cb}(\pu(n,1);\bbR) \rightarrow \upH^2_c(\pu(n,1);\bbR) \ .
$$
Hence, there exists a class $\kappa_n^b \in \upH^2_{cb}(\pu(n,1);\bbR)$ which is sent to $\kappa$ under $\textup{comp}^2$. Since the group $\upH^2_{cb}(\pu(n,1);\bbR)$ is one dimensional, we can assume that $\kappa_n^b$ is its generator as real vector space. 
The relation between the Cartan cocycle and the bounded K\"ahler class is the following (Remark~\ref{oss:cartan:cocycle:det:class})
$$
[c_n]=\frac{\kappa^b_n}{\pi} \in \upH^2_{cb}(\pu(n,1);\bbR) \ . 
$$
\begin{oss}\label{oss:cartan:repr:kaehler}
The previous equality shows that the cocycle $\pi c_n$ is a representative of the bounded K\"ahler class. 
\end{oss}

\begin{setup}\label{setup:cartan:invariant}
Let $n \geq 2$. We assume the following
\begin{itemize}
\item Let $\Gamma \leq \pu(n,1)$ be a torsion-free lattice;

\item Let $(X,\mu_X)$ be a standard Borel probability $\Gamma$-space;

\item Let $\sigma \colon \Gamma \times X \rightarrow \pu(m,1)$ be a measurable cocycle.
\end{itemize}
\end{setup}
In the previous situation $\sigma$ induces a map in bounded cohomology (Lemma~\ref{lem:pullback:map:cocycle:cohomology})
$$
\upH^2_b(\sigma): \textup{H}^2_{cb}(\pu(m,1);\bbR) \rightarrow \upH^2_b(\Gamma;\bbR) \ .
$$
Moreover, since $\Gamma$ is a lattice, there exists a transfer map (Definition~\ref{def:trans:map})
$$
\trans_\Gamma^2:\upH^2_b(\Gamma;\bbR) \rightarrow \upH^2_{cb}(\pu(n,1);\bbR) \ . 
$$
Composing the two maps above we can give the following
\begin{deft}\label{def:cartan:invariant}
In the situation of Setup~\ref{setup:cartan:invariant}, the \emph{Cartan invariant associated to the cocycle $\sigma$} is the real number $i(\sigma)$ appearing in the following equation
\begin{equation}\label{eq:cartan:no:boundary:map}
\trans_\Gamma^2 \circ \upH^2_b(\sigma)(\kappa^b_m)=i(\sigma)\kappa^b_n \ . 
\end{equation}
\end{deft}
\begin{oss}\label{oss:cartan:inv:cocycle:1:well:def}
The previous formula is well-defined since $\upH^2_{cb}(\pu(n,1);\bbR) \cong \mathbb{R}\kappa_n^b$.
\end{oss}

We explain now how to compute the Cartan invariant in terms of a boundary map associated to $\sigma$. This will show that the Cartan invariant is a multiplicative constant in the sense of Definition \ref{def:multiplicative:constant}. 

First recall that every non-elementary measurable cocycle $\sigma \colon \Gamma \times X \rightarrow \pu(m,1)$ admits an \emph{essentially unique} boundary map~\cite[Proposition 3.3]{MonShal0} (Remark~\ref{oss:esistenza:mappe:bordo}). Here essentially unique means that two boundary maps coincide on a full measure set. As noticed by Monod and Shalom, the non-elementary condition means that the group of the real points of the algebraic hull of $\sigma$ (Definition \ref{def:alg:hull}) is a non-elementary subgroup of $\pu(n,1)$.

By Lemma \ref{lem:pullback:implemented:boundary:map} the existence of a boundary map implies that the pullback map $\upH^2_b(\sigma)$ coincides with the following composition
$$
\upH^2_b(\sigma)=\upH^2(\Phi^X) \circ \mathfrak{c}^2 \ ,
$$
where $\mathfrak{c}^2$ and $\upH^2(\Phi^X)$ are the maps introduced in Equation~(\ref{eq:canonical:map:B:L}) and Definition~\ref{def:pullback:boundary}, respectively. Thus, Equation~\eqref{eq:cartan:no:boundary:map} is equivalent to 
\begin{equation}\label{eq:cartan:multiplicative:constant} 
\trans_\Gamma^2 \circ \upH^2(\Phi^X)([\pi c_m])=i(\sigma) \kappa^b_n \ .
\end{equation}
This shows that the Cartan invariant is a \emph{multiplicative constant} in the sense of Definition \ref{def:multiplicative:constant}. We are going to prove that Equation \ref{eq:cartan:multiplicative:constant} actually holds at the levels of cochains. 

\begin{repprop}{prop:cartan:multiplicative:cochains}
In the situation of Setup~\ref{setup:cartan:invariant}, let $\sigma$ be a non-elementary measurable cocycle with boundary map $\phi \colon \partial_\infty \bbH^n_{\bbC} \times X \rightarrow \partial_\infty \bbH^m_{\bbC}$. Then, for every triple of pairwise distinct points $\xi_1,\xi_2,\xi_3 \in \partial_\infty \bbH^n_{\bbC}$, we have
\begin{small}
\begin{equation}\label{eq:cartan:multiplicative:cochains}
i(\sigma)c_n(\xi_1,\xi_2,\xi_3)=\int_{\Gamma \backslash \pu(n,1)} \int_X c_m(\phi(\overline{g}\xi_1,x),\phi(\overline{g}\xi_2,x),\phi(\overline{g}\xi_3,x))d\mu(\overline{g})d\mu_X(x) \ .
\end{equation}
\end{small}
Here $\mu$ is a $\pu(n,1)$-invariant probability measure on the quotient $\Gamma \backslash \pu(n,1)$. 
\end{repprop}

\begin{proof}
We already know that $\pi c_n$ is a representative of $\kappa^b_n$ (Remark~\ref{oss:cartan:repr:kaehler}). Moreover, since $\Gamma$ acts doubly ergodically on $\partial_\infty \bbH^n_{\bbC}$, there are no essentially bounded $\Gamma$-invariant alternating functions on $(\partial_\infty \bbH^n_{\bbC})^2$. 
Hence, if we rewrite Equation \eqref{eq:cartan:multiplicative:constant} in terms of cochains, we obtain a formula
$$
\widehat{\trans}^2_{\partial_\infty \bbH^n_{\bbC}} \circ \upC^2(\Phi^X)(\pi c_m)=i(\sigma)(\pi c_n) \ ,
$$
without coboundaries. Here $\widehat{\trans}^2_{\partial_\infty \bbH^n_{\bbC}}$ is the map introduced at the end of Section \ref{subsec:transfer:maps}. Since the constant $\pi$ appears on both sides, we get
$$
i(\sigma)c_n(\xi_1,\xi_2,\xi_3)=\int_{\Gamma \backslash \pu(n,1)} \int_X c_m(\phi(\overline{g}\xi_1,x),\phi(\overline{g}\xi_2,x),\phi(\overline{g}\xi_3,x))d\mu(\overline{g})d\mu_X(x) \ ,
$$
for almost every $\xi_1,\xi_2,\xi_3 \in \partial_\infty \bbH^n_{\bbC}$. The fact that the equation is in fact true for \emph{every triple of pairwise distinct points} can be proved by following \emph{verbatim} Pozzetti's proof~\cite[Lemma 2.11]{Pozzetti}. This finishes the proof.
\end{proof}

The interpretation of the Cartan invariant as a multiplicative constant has many consequences. 
For instance, the Cartan invariant of measurable cocyles extends the one for representations introduced by Burger and Iozzi~\cite{BIcartan}. 

\begin{prop}\label{prop:cartan:rep}
In the situation of Setup~\ref{setup:cartan:invariant}, let $\rho \colon \Gamma \rightarrow \pu(m,1)$ be a non-elementary representation and let $\sigma_\rho \colon \Gamma \times X \rightarrow \pu(m,1)$ be the associated measurable cocycle. Then, we have
$$
i(\sigma_\rho)=i(\rho) \ .
$$
\end{prop}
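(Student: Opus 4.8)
The plan is to reduce the statement to the comparison between the two pullback maps already established earlier in the paper, and then to exploit the one-dimensionality of $\upH^2_{cb}(\pu(n,1);\bbR)$. Recall that the Cartan invariant of the cocycle is defined by the equation
$$
\trans_\Gamma^2 \circ \upH^2_b(\sigma_\rho)(\kappa_m^b) = i(\sigma_\rho)\,\kappa_n^b
$$
(Definition~\ref{def:cartan:invariant}), while Burger and Iozzi's invariant $i(\rho)$ attached to the non-elementary representation $\rho$ is defined by the corresponding formula $\trans_\Gamma^2 \circ \upH^2_{cb}(\rho)(\kappa_m^b) = i(\rho)\,\kappa_n^b$. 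Since $\upH^2_{cb}(\pu(n,1);\bbR)\cong\bbR\kappa_n^b$ is one-dimensional, in order to conclude $i(\sigma_\rho)=i(\rho)$ it will suffice to show that the two pullback maps $\upH^2_b(\sigma_\rho)$ and $\upH^2_{cb}(\rho)$ agree on the class $\kappa_m^b$; the coefficient in front of $\kappa_n^b$ is then forced to be the same.

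First I would record that, being non-elementary, $\rho$ admits a $\rho$-equivariant boundary map $\varphi\colon\partial_\infty\bbH^n_\bbC\to\partial_\infty\bbH^m_\bbC$, and that the associated cocycle $\sigma_\rho$ then carries the boundary map $\phi(\eta,x)=\varphi(\eta)$, which does not depend on the $X$-variable (Section~\ref{subsec:rep:coc}). This places us exactly in the hypotheses of Proposition~\ref{prop:pullback:coc:vs:repr}. The heart of the argument is then a short diagram chase. On the one hand, Burger and Iozzi's functorial pullback $\upH^2_{cb}(\rho)$ is implemented at the cochain level by $\upC^\bullet(\varphi)$. On the other hand, Lemma~\ref{lem:pullback:implemented:boundary:map} shows that $\upH^2_b(\sigma_\rho)$ is implemented by $\upC^\bullet(\Phi^X)$, which by Proposition~\ref{prop:pullback:coc:vs:repr} equals $\upC^\bullet(\varphi)$. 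Hence $\upH^2_b(\sigma_\rho)=\upH^2_{cb}(\rho)$, and applying $\trans_\Gamma^2$ to the image of $\kappa_m^b$ under both maps yields the same real multiple of $\kappa_n^b$, giving $i(\sigma_\rho)=i(\rho)$.

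The main obstacle I anticipate is bookkeeping rather than conceptual. I must make sure the class $\kappa_m^b$ is handled through a representative cochain to which Proposition~\ref{prop:pullback:coc:vs:repr} applies, namely $\pi c_m\in\calB^\infty_{\textup{alt}}((\partial_\infty\bbH^m_\bbC)^3;\bbR)^{\pu(m,1)}$ (Remark~\ref{oss:cartan:repr:kaehler}), and that the canonical map $\mathfrak{c}^\bullet$ of Equation~(\ref{eq:canonical:map:B:L}) is inserted correctly, so that the identification $\upH^2_b(\sigma_\rho)=\upH^2(\Phi^X)\circ\mathfrak{c}^2$ and the equality $\upC^\bullet(\Phi^X)=\upC^\bullet(\varphi)$ together genuinely produce an equality of bounded cohomology \emph{classes} and not merely of cochains. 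Once these compatibilities are verified, the coincidence of the two invariants is immediate from the fact that $\kappa_n^b$ generates $\upH^2_{cb}(\pu(n,1);\bbR)$ as a real vector space.
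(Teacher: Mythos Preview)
Your proposal is correct and follows essentially the same strategy as the paper: both arguments use the boundary map $\varphi$ of $\rho$, build $\phi(\eta,x)=\varphi(\eta)$, and then invoke Proposition~\ref{prop:pullback:coc:vs:repr} to identify $\upC^\bullet(\Phi^X)$ with $\upC^\bullet(\varphi)$. The only cosmetic difference is that the paper carries this out at the cochain level, writing $i(\sigma_\rho)c_n=\widehat{\trans}^2_{\partial_\infty\bbH^n_\bbC}\circ\upC^2(\Phi^X)(c_m)=\widehat{\trans}^2_{\partial_\infty\bbH^n_\bbC}\circ\upC^2(\varphi)(c_m)=i(\rho)c_n$ and citing \cite[Lemma~5.3]{BIgeo} for the last equality, whereas you phrase the comparison at the level of cohomology classes via $\upH^2_b(\sigma_\rho)=\upH^2_{cb}(\rho)$ and appeal directly to the defining equation for $i(\rho)$.
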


\begin{proof}
Since $\rho$ is non elementary, $\rho$ admits an essentially unique boundary map $\varphi:\partial_\infty \bbH^n_{\bbC} \rightarrow \partial_\infty \bbH^m_{\bbC}$~\cite[Corollary 3.2]{burger:mozes}. Hence, one can define a boundary map for the cocycle $\sigma_\rho$ as
$$
\phi \colon \partial_\infty \bbH^n_{\bbC} \times X \rightarrow \partial_\infty \bbH^m_{\bbC} \ , \ \phi(\xi,x):=\phi(\xi) \ ,
$$
for almost every $\xi \in \partial_\infty \bbH^n_{\bbC}$ and almost every $x \in X$. This readily implies that
\begin{align*}
i(\sigma_\rho)c_n&=\widehat{\trans}^2_{\partial_\infty \bbH^n_{\bbC}} \circ \upC^2(\Phi^X)(c_m) \\
&=\widehat{\trans}^2_{\partial_\infty \bbH^n_{\bbC}} \circ \upC^2(\varphi)(c_m) \\
&=i(\rho)c_n \ .
\end{align*}
The first equality comes from Proposition~\ref{prop:cartan:multiplicative:cochains}, the second one is due to Proposition~\ref{prop:pullback:coc:vs:repr} and finally the last equality is proved by Burger and Iozzi~\cite[Lemma 5.3]{BIgeo}. 
\end{proof}

We conclude this section by showing that the Cartan invariant is constant along cohomology classes and it has bounded  absolute value.
\begin{prop}\label{prop:cartan:cohomology:bound}
In the situation of Setup~\ref{setup:cartan:invariant}, let $\sigma$ be a non-elementary measurable cocycle with boundary map $\phi:\partial_\infty \bbH^n_{\bbC} \times X \rightarrow \partial_\infty \bbH^m_{\bbC}$. Then, the following hold
\begin{enumerate} 
	\item The Cartan invariant $i(\sigma)$ is constant along the $\pu(m,1)$-cohomology class of $\sigma$.
	\item $|i(\sigma)| \leq 1$.
\end{enumerate}
\end{prop}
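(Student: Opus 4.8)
The plan is to derive both claims from the cochain formula of Proposition~\ref{prop:cartan:multiplicative:cochains} together with the cohomological invariance recorded in Proposition~\ref{prop:invariance:cohomology}. For the first statement, I would take a cocycle $\sigma' = f.\sigma$ cohomologous to $\sigma$ via a measurable map $f \colon X \to \pu(m,1)$. Since cohomologous cocycles share the same algebraic hull (so $\sigma'$ is again non-elementary) and the twisted map $f.\phi$ is a boundary map for $\sigma'$ by Remark~\ref{oss:twisted:boundary:map}, the invariant $i(\sigma')$ is defined and satisfies the boundary-map interpretation of Equation~\eqref{eq:cartan:multiplicative:constant}. Applying Proposition~\ref{prop:invariance:cohomology} with $\psi' = c_m$ gives the cochain identity $\upC^2(\Phi^X)(c_m) = \upC^2(f.\Phi^X)(c_m)$; passing to cohomology classes and composing with $\trans_\Gamma^2$ yields $i(\sigma)\kappa^b_n = i(\sigma')\kappa^b_n$, and since $\upH^2_{cb}(\pu(n,1);\bbR) \cong \bbR\kappa^b_n$ is one-dimensional (Remark~\ref{oss:cartan:inv:cocycle:1:well:def}) I conclude $i(\sigma) = i(\sigma')$.

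For the second statement I would appeal to the upper bound of Proposition~\ref{prop:multiplicative:upperbound}. As observed in the proof of Proposition~\ref{prop:cartan:multiplicative:cochains}, the double ergodicity of the $\Gamma$-action on $\partial_\infty \bbH^n_{\bbC}$ removes all invariant alternating functions in degree two, so condition $(\NCT)$ holds and $i(\sigma)$ is exactly the multiplicative constant $\lambda_{c_m,c_n}(\sigma)$. Proposition~\ref{prop:multiplicative:upperbound} then gives $|i(\sigma)| \leq \lVert c_m \rVert_\infty / \lVert c_n \rVert_\infty$, and since the Cartan cocycle takes values in $[-1,1]$ and attains $\pm 1$ precisely on triples lying on a common chain, we have $\lVert c_m \rVert_\infty = \lVert c_n \rVert_\infty = 1$, whence $|i(\sigma)| \leq 1$. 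Equivalently, one may bound Equation~\eqref{eq:cartan:multiplicative:cochains} directly: evaluating at a triple on a common chain makes $|c_n| = 1$, while the right-hand side is an average of $c_m$-values over the two probability measures $\mu$ and $\mu_X$, hence at most $\lVert c_m \rVert_\infty = 1$ in absolute value.

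I expect no serious obstacle, since both parts reduce to results already established in the previous sections. The only point deserving attention is confirming that $\sigma'$ remains within the scope of Setup~\ref{setup:cartan:invariant}, i.e.\ that it is still non-elementary and admits a boundary map so that $i(\sigma')$ is genuinely defined; this is supplied by the invariance of the algebraic hull under the cohomology relation (Definition~\ref{def:alg:hull}) together with Remark~\ref{oss:twisted:boundary:map}. Beyond this, part~(1) is essentially a restatement of Proposition~\ref{prop:invariance:cohomology} through the defining Equation~\eqref{eq:cartan:multiplicative:constant}, and part~(2) is a direct consequence of the norm bound together with the normalization $\lVert c_n \rVert_\infty = 1$ recorded in the construction of the Cartan cocycle.
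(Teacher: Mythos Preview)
Your proposal is correct and follows essentially the same route as the paper: part~(1) is deduced from Proposition~\ref{prop:invariance:cohomology} applied to $c_m$ and then fed into the defining Equation~\eqref{eq:cartan:multiplicative:constant}, and part~(2) is obtained from Proposition~\ref{prop:multiplicative:upperbound} together with $\lVert c_n \rVert_\infty = \lVert c_m \rVert_\infty = 1$ once $(\NCT)$ is known. The extra care you take in checking that $f.\sigma$ remains non-elementary and admits the twisted boundary map is a welcome clarification that the paper leaves implicit.
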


\begin{proof}
\emph{Ad.~1} The first statement follows by Proposition~\ref{prop:invariance:cohomology}. Indeed if $f \colon X \rightarrow \pu(m,1)$ is a measurable map, then Proposition~\ref{prop:invariance:cohomology} shows that 
$$
\upH^2(f.\Phi^X)([\pi c_m])=\upH^2(\Phi^X)([\pi c_m]) \ .
$$
Hence, we get
$$
i(f.\sigma)\kappa^b_n=\trans_\Gamma^2 \circ \upH^2(f.\Phi^X)([\pi c_m])=\trans_\Gamma^2 \circ \upH^2(\Phi^X)([\pi c_m])=i(\sigma) \kappa^b_n \ .$$
This shows that
$
i(f.\sigma)=i(\sigma)
$,
as desired. 

\emph{Ad.~2} Since the Cartan invariant is a multiplicative constant and condition $(\NCT)$ is satisfied, Proposition \ref{prop:multiplicative:upperbound} implies
$$
|i(\sigma)| \leq 1 \ .
$$
Here we used the fact that $\lVert c_n \rVert_\infty=\lVert c_m \rVert_\infty=1$. 
\end{proof}

The second item of the previous proposition leads to the following definition (compare with Definition \ref{def:maximal:cocycle}).

\begin{deft}\label{def:maximal:cartan}
In the situation of Setup~\ref{setup:cartan:invariant}, a non-elementary cocycle $\sigma$ is \emph{maximal} if $i(\sigma)=1$. 
\end{deft}

\section{Totally real cocycles}\label{sec:tot:real}

In this section we introduce the notion of \emph{totally real cocycles}. 
Our definition extends the one by Burger and Iozzi~\cite{BIreal} for representations. We aim to investigate the relation between the vanishing of the Cartan invariant and the condition of being totally real. We will show that totally real cocycles have trivial Cartan invariant. On the other hand, it is natural to ask whether the converse is also true. We partially answer to this question by showing that ergodic cocycles inducing the trivial map in bounded cohomology are totally real. 

\begin{deft}\label{def:totally:real}
In the situation of Setup~\ref{setup:cartan:invariant} we denote by $\mathbf{L}$ the algebraic hull of $\sigma$.  Let $L \coloneqq \mathbf{L}(\mathbb{R})^\circ$ be the connected component of the identity of the group of real points of $\mathbf{L}$. 
A measurable cocycle $\sigma$ is \emph{totally real} if for some $1 \leq k \leq m$ we have
$$
L \subseteq \calN_{\textup{PU}(m,1)}(\mathbb{H}^k_{\mathbb{R}}) \ ,
$$
where $\mathbb{H}^k_{\mathbb{R}} \subset \mathbb{H}^m_{\mathbb{C}}$ is a totally geodesic copy of the real hyperbolic $k$-space. Here $\calN_{\pu(m,1)}(\mathbb{H}^k_{\mathbb{R}})$ denotes the subgroup of $\pu(m,1)$ preserving the fixed copy of $\bbH^k_{\bbR}$, i.e. $g (\bbH^k_{\bbR}) \subset \bbH^k_{\bbR}$ for every $g \in \calN_{\textup{PU}(m,1)}(\mathbb{H}^k_{\mathbb{R}})$.
\end{deft}

\begin{oss}\label{oss:tot:real:cohomologous}
By the definition of algebraic hull (Definition~\ref{def:alg:hull}) every totally real cocycle $\sigma$ is cohomologous to a cocycle $\widehat{\sigma}$ whose image is contained in $L$. 
Additionally, $\calN_{\textup{PU}(m,1)}(\mathbb{H}^k_{\mathbb{R}})$ is an almost direct product of a compact subgroup $K \leq \pu(m,1)$ with an embedded copy of $\textup{PO}(k,1)$ inside $\textup{PU}(m,1)$. Hence, the cocycle $\widehat{\sigma}$ preserves the totally geodesic copy $\mathbb{H}^k_{\mathbb{R}} \subset \mathbb{H}^m_{\mathbb{C}}$ stabilized by $L$. 
\end{oss} 

In the sequel we need the following

\begin{lem}\label{lem:no:two:coboundary}
Let $\Gamma \leq \textup{PU}(n,1)$ be a torsion-free lattice, with $n \geq 2$, and let $(X,\mu_X)$ be a standard Borel probability space. Then 
$$
\upH^2_b(\Gamma;\upL^\infty(X)) \cong \mathcal{Z}\upL^\infty_{\textup{w}^\ast,\textup{alt}}((\partial_\infty \bbH^n_{\bbC})^3; \upL^\infty(X))^\Gamma \ .
$$
Here, the letter $\mathcal{Z}$ denotes the set of cocycles and the subscript \emph{alt} denotes the restrictions to alternating essentially bounded weak-${}^*$ measurable functions. 
\end{lem}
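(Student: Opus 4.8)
The plan is to compute $\upH^2_b(\Gamma;\upL^\infty(X))$ through the boundary resolution and then to verify by hand that there are no alternating coboundaries in degree two. First I would recall that, writing $G=\pu(n,1)$ and fixing a minimal parabolic subgroup $Q\leq G$, the Furstenberg–Poisson boundary $\partial_\infty\bbH^n_{\bbC}$ is identified with $G/Q$ and is an amenable regular $G$-space (Example~\ref{es:reg:spaces}.2 and Remark~\ref{oss:am:spaces}). Since $\Gamma$ is a lattice, hence a closed subgroup of $G$, Theorem~\ref{teor:monod:2:rel:inj:strong} together with Remark~\ref{oss:L:resol:uguale:G} shows that the complex of $\Gamma$-invariant alternating essentially bounded weak-$^\ast$ measurable functions
$$
\bigl(\upL^\infty_{\textup{w}^\ast,\textup{alt}}((\partial_\infty\bbH^n_{\bbC})^{\bullet+1};\upL^\infty(X))^\Gamma,\ \delta^\bullet\bigr)
$$
computes $\upH^\bullet_b(\Gamma;\upL^\infty(X))$ (recall that $\upH_b=\upH_{cb}$ for the discrete group $\Gamma$, Remark~\ref{oss:discr:group:cohom:same:G}). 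In degree two this realizes $\upH^2_b(\Gamma;\upL^\infty(X))$ as the quotient of the alternating cocycles $\mathcal{Z}\upL^\infty_{\textup{w}^\ast,\textup{alt}}((\partial_\infty\bbH^n_{\bbC})^3;\upL^\infty(X))^\Gamma$ by the image of $\delta^1$. Thus the statement is equivalent to the claim that this image is trivial, and for that it suffices to show that the degree-one term of the alternating complex vanishes:
$$
\upL^\infty_{\textup{w}^\ast,\textup{alt}}((\partial_\infty\bbH^n_{\bbC})^2;\upL^\infty(X))^\Gamma=0.
$$

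Next I would compute this space of invariants. Using the identification $\upL^\infty_{\textup{w}^\ast}((\partial_\infty\bbH^n_{\bbC})^2;\upL^\infty(X))^\Gamma\cong\upL^\infty((\partial_\infty\bbH^n_{\bbC})^2\times X)^\Gamma$ of~\cite[Corollary~2.3.3]{monod:libro} (exactly as in the proof of Lemma~\ref{lemma:pullback:cochain}), an invariant element is a $\Gamma$-invariant function $F(\xi_1,\xi_2,x)$ for the diagonal action. The key input is the \emph{double ergodicity with coefficients} of the $\Gamma$-action on the Poisson boundary $\partial_\infty\bbH^n_{\bbC}$ (Burger–Monod~\cite{burger2:articolo}, see also~\cite{monod:libro,albuquerque99}): every $\Gamma$-invariant element of $\upL^\infty_{\textup{w}^\ast}((\partial_\infty\bbH^n_{\bbC})^2;\upL^\infty(X))$ is essentially constant in the two boundary variables, i.e. of the form $F(\xi_1,\xi_2,x)=h(x)$ with $h\in\upL^\infty(X)^\Gamma$. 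The alternating condition then forces $h(x)=-h(x)$ for almost every $x$, whence $h=0$ and $F=0$. This gives the vanishing of the degree-one alternating invariants, and therefore the claimed identification of $\upH^2_b(\Gamma;\upL^\infty(X))$ with the space of alternating cocycles.

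The main obstacle is the second step. While the double ergodicity of $\Gamma$ on $\partial_\infty\bbH^n_{\bbC}$ with \emph{trivial} real coefficients is the classical fact used elsewhere in the paper (cf.~Example~\ref{es:when:NCT}.1 and the proof of Proposition~\ref{prop:cartan:multiplicative:cochains}), here one genuinely needs the version \emph{with coefficients} in the nontrivial $\Gamma$-module $\upL^\infty(X)$: for a non-ergodic $X$ there do exist invariant functions of the form $h(x)$, and only the alternation kills them. One therefore has to check that $\upL^\infty(X)$ is an admissible coefficient module — it is the dual of the separable Banach space $\upL^1(X)$ — so that the Burger–Monod double ergodicity with coefficients applies to $\partial_\infty\bbH^n_{\bbC}$, which is the Poisson boundary both of $G$ and of the lattice $\Gamma$. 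Once this is in place, the alternation argument is immediate and the proof concludes.
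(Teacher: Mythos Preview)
Your proof is correct and follows essentially the same route as the paper: reduce to the vanishing of $\upL^\infty_{\textup{w}^\ast,\textup{alt}}((\partial_\infty\bbH^n_{\bbC})^2;\upL^\infty(X))^\Gamma$ via the identification with $\upL^\infty((\partial_\infty\bbH^n_{\bbC})^2\times X)^\Gamma$ \cite[Corollary~2.3.3]{monod:libro}, and then use double ergodicity with coefficients plus alternation. The paper phrases the key step slightly differently, citing \cite[Proposition~2.4]{MonShal0} to conclude directly that any $\Gamma$-invariant element of $\upL^\infty((\partial_\infty\bbH^n_{\bbC})^2\times X)$ is essentially constant; your version, which first gets $F(\xi_1,\xi_2,x)=h(x)$ and then kills $h$ by alternation, is the same argument unpacked and is in fact the more robust formulation when $X$ is not assumed ergodic.
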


\begin{proof}
For every $k \in \bbN$ we have the following
$$
\upL^\infty_{\textup{w}^\ast}((\partial_\infty \bbH^n_{\bbC})^k; \upL^\infty(X))^\Gamma \cong \upL_{\textup{w}^\ast}^\infty((\partial_\infty \bbH^n_{\bbC})^k \times X;\bbR)^\Gamma \ ,
$$
where $\Gamma$ acts on $(\partial_\infty \bbH^n_{\bbC})^k \times X$ diagonally \cite[Corollary 2.3.3]{monod:libro}. Moreover, every $\Gamma$-invariant essentially bounded weak-${}^*$ measurable function on $(\partial_\infty \bbH^n_{\bbC})^2\times X$ must be essentially constant~\cite[Proposition 2.4]{MonShal0}. Since an alternating function that is constant vanishes, we have that  
$$
\upL^\infty_{\textup{w}^\ast,\textup{alt}}((\partial_\infty \bbH^n_{\bbC})^2; \upL^\infty(X))^\Gamma = 0 \ .
$$
This shows that there are no coboundaries in dimension two, whence the thesis. 
\end{proof}

The notion of totally real cocycles is strictly related to the vanishing of the Cartan invariant. This correspondence is described by the following result which is a suitable adaptation of a result by Burger and Iozzi~\cite[Theorem 1.1]{BIreal} to the case of measurable cocycles.

\begin{repteor}{teor:totally:real}
In the situation of Setup~\ref{setup:cartan:invariant}, let $\sigma$ be a non-elementary measurable cocycle with boundary map $\phi:\partial_\infty \mathbb{H}^n_{\bbC} \times X \rightarrow \partial_\infty \bbH^m_{\bbC}$. Then, the following hold

\begin{enumerate}
	\item  If $\sigma$ is totally real, then $i(\sigma)=0$;
	\item  If $X$ is $\Gamma$-ergodic and $\textup{H}^2(\phi)([c_m])=0$, then $\sigma$ is totally real.
\end{enumerate}
\end{repteor}

\begin{proof}
\emph{Ad~1.} Let $\mathbf{L}$ be the algebraic hull of $\sigma$ and let $L=\mathbf{L}(\bbR)^\circ$ be the connected component of the real points of $\mathbf{L}$ containing the identity.  By Remark~\ref{oss:tot:real:cohomologous}, there exists a cocycle $\widehat{\sigma}$ cohomologous to $\sigma$ such that $$\widehat{\sigma} (\Gamma \times X) \subset L \subset \calN_{\textup{PU}(m,1)}(\bbH^k_{\bbR}) \ ,$$
for some $1 \leq k \leq m$. 
Since $\widehat{\sigma}$ is cohomologous to $\sigma$, it admits a boundary map $\widehat{\phi}$ (Remark~\ref{oss:twisted:boundary:map}). 
Hence, since the Cartan invariant is constant along the $\pu(m,1)$-cohomology class of $\sigma$, it is sufficient to show that $i(\widehat{\sigma}) = 0$
(Proposition \ref{prop:cartan:cohomology:bound}). 

By Remark~\ref{oss:tot:real:cohomologous} the cocycle $\widehat{\sigma}$ also preserves the totally geodesic copy of $\mathbb{H}^k_{\bbR}$ stabilized by $L$, whence it preserves the boundary at infinity $\partial_\infty \mathbb{H}^k_{\bbR}$. We identify $\partial_\infty \mathbb{H}^k_{\bbR}$ with a $(k-1)$-dimensional sphere $\widehat{\calS} \subset \bbH^m_{\bbC}$ as explained in Section~\ref{sec:cartan:invariant}. Hence, the boundary map $\widehat{\phi}$ takes values in $\widehat{\calS}$, that is 

$$
\widehat{\phi} \colon \partial_\infty \bbH^n_{\bbC} \times X \rightarrow \widehat{\calS} \ .
$$

For almost every $x \in X$, we then define 
\begin{align*}
\widehat{\phi}_x \colon \partial_\infty \bbH^n_{\bbC} \rightarrow \widehat{\calS} \\
\widehat{\phi}_x(\xi) \coloneqq \widehat{\phi}(\xi,x) \ .
\end{align*}
The map $\widehat{\phi}_x$ is measurable for almost every $x \in X$~\cite[Lemma 2.6]{fisher:morris:whyte}. By Proposition \ref{prop:cartan:multiplicative:cochains} we have
$$
\int_{\Gamma \backslash \pu(n,1)} \int_X c_m(\widehat{\phi}_x(\overline{g}.\xi_1),\widehat{\phi}_x(\overline{g}.\xi_2),\widehat{\phi}_x(\overline{g}.\xi_3))d\mu_X(x)d\mu(\overline{g})=i(\widehat{\sigma}) c(\xi_1,\xi_2,\xi_3) \ ,
$$
for almost every $\xi_1,\xi_2,\xi_3 \in \partial_\infty \bbH^n_{\bbC}$. Here $\mu$ is the $\pu(n,1)$-invariant probability measure on $\Gamma \backslash \pu(n,1)$. Since $\widehat{\phi}_x$ takes values into the sphere $\widehat{\calS}$ for almost every $x \in X$, we have that
$$
c_m(\widehat{\phi}_x(\overline{g}.\xi_1),\widehat{\phi}_x(\overline{g}.\xi_2),\widehat{\phi}_x(\overline{g}.\xi_3))=0 \ ,
$$
for almost every $x \in X$ and almost every $\overline{g} \in \Gamma \backslash \pu(n,1)$~\cite[Corollary 3.1]{BIreal}. Thus, $i(\widehat{\sigma})$ vanishes. Since $i(\sigma) = i(\widehat{\sigma})$ we get the thesis. 

\emph{Ad~2.} Since $\upC^2(\phi)$ is a cochain map (Lemma~\ref{lemma:pullback:cochain}), it induces a map $\upH^2(\phi)$ in cohomology.
If $\upH^2(\phi)([c_m])=0$, then we have
$$
\upH^2(\phi)([c_m])=\left[ \upC^2(\phi)(c_m)\right] =0 \ .
$$
Since by Lemma \ref{lem:no:two:coboundary} there are no $\upL^\infty(X)$-coboundaries in degree $2$, we have that
$$
\upC^2(\phi)(c_m)=0 \ .
$$
More precisely 
\begin{equation}\label{equation:vanishing:cartan:ess:image}
c_m(\phi_x(\xi_1),\phi_x(\xi_2),\phi_x(\xi_3))=0 \ ,
\end{equation}
for almost every $x \in X$ and almost every $\xi_1,\xi_2,\xi_3 \in \partial_\infty \bbH^n_{\bbC}$. For almost every point $x \in \, X$, let us denote by $E_x$ the essential image of $\phi_x$, i.e. the support of the push-forward measure $(\phi_x)_\ast \nu$, where $\nu$ is the standard round measure on $\partial_\infty \bbH^n_{\bbC}$. 

We have just proved in Equation~(\ref{equation:vanishing:cartan:ess:image}) that for almost every $x \in \, X$ the Cartan cocycle $c_m$ vanishes over $E_x$. Hence, as proved by Burger and Iozzi~\cite[Corollary 3.1]{BIreal}, for almost every $x \in X$, there exists an integer $1 \leq k(x) \leq m$ and a real $(k(x)-1)$-sphere $\calS_x$ embedded in $\partial_\infty \bbH^m_{\bbC}$, such that 
$$
E_x \subseteq \calS_x \ .
$$
Moreover, we can choose $\calS_x$ to be minimal with respect to the inclusion. We claim now that 
\begin{equation} \label{eq:dim:inv}
\calS_{\gamma.x}=\sigma(\gamma,x)\calS_x \ ,
\end{equation}
for every $\gamma \in \Gamma$ and almost every $x \in X$. First, the definition of $E_x$ and the $\sigma$-equivariance of  $\phi$ imply that 
$$
E_{\gamma.x}=\sigma(\gamma,x) E_x\ ,
$$
for every $\gamma \in \Gamma$ and almost every $x \in X$. Hence, we have
$$
E_{\gamma.x} =\sigma(\gamma,x)E_x \subset \sigma(\gamma,x)\calS_x \ .
$$
Thus, the minimality assumption shows that 
$$
\calS_{\gamma.x} \subset \sigma(\gamma,x)\calS_x \ .
$$

By interchanging the role of $E_{\gamma,x}$ and $E_x$, we get the claim. As a consequence $k(\gamma.x)=k(x)$ for every $\gamma \in \Gamma$ and almost every $x \in X$. 
The ergodicity assumption on the space $(X,\mu_X)$ then implies that almost all the spheres have the same dimension, i.e. $k(x) = k \in \, \mathbb{N}$ for almost every $x \in \, X$.

Let us now denote by $\textup{Sph}^{k-1}(\partial_\infty \bbH^m_{\bbC})$ the space of $(k-1)$-spheres embedded in the boundary at infinity $\partial_\infty \bbH^m_{\bbC}$. 
Since the action of $\pu(m,1)$ on $(k-1)$-spheres is transitive, $\textup{Sph}^{k-1}(\partial_\infty \bbH^m_{\bbC})$ is a $\pu(m,1)$-homogeneous space.
Let $G_0=\calN_{\pu(m,1)}(\calS_0)$ be the subgroup of $\pu(m,1)$ preserving a fixed $(k-1)$-sphere $\calS_0$. Then we can define a map
$$
\calS \colon X \rightarrow \textup{Sph}^{k-1}(\partial_\infty \bbH^m_{\bbC}) \ , \ \ \calS(x)=\calS_x \ ,
$$
which is measurable because $\phi_x$ varies measurably with respect to $x \in X$ \cite[Lemma 2.6]{fisher:morris:whyte}. 
Since $\textup{Sph}^{k-1}(\partial_\infty \bbH^m_{\bbC}) \cong \textup{PU}(m,1)/G_0$, we can compose the previous map with a measurable section~ \cite[Corollary A.8]{zimmer:libro} 
$$
s \colon\textup{PU}(m,1)/G_0 \rightarrow \textup{PU}(m,1) \ .
$$
Let $f \colon X \rightarrow \pu(m,1)$  be the composition $s \circ \calS$. Since $f$ is a composition of measurable maps, it is measurable. Moreover, by construction we have 
$$
\calS_x=f(x)\calS_{0} \ ,
$$
for almost every $x \in X$. 

Let us consider now the $f$-twisted cocycle $\sigma_0=f.\sigma$ associated to $\sigma$ (Definition \ref{def:cohomology:cocycle}). On the one hand we have that 
$$
\calS_{\gamma.x}=f(\gamma.x)\calS_0 \ ,
$$
on the other
$$
\calS_{\gamma.x}=\sigma(\gamma,x)f(x)\calS_0 \ .
$$
Hence $\sigma_0$ preserves $\calS_0$. This implies that $\sigma_0(\Gamma \times X) \subset G_0$. If $\mathbf{L}$ denotes the algebraic hull of $\sigma$ (which is the same for $\sigma_0$) and $L=\mathbf{L}(\bbR)^\circ$, we get
$$
L \subseteq G_0 \ ,
$$
whence the thesis.
\end{proof}

\begin{oss}
Unfortunately, we are not able to show that \emph{Ad.~2} actually  provides a complete converse to \emph{Ad.1}. Indeed, it is not unlikely that the vanishing of the pullback $\upH^2(\phi)([c_m])$ is in fact a stronger condition than the vanishing of the Cartan invariant associated to the cocycle $\sigma$.
A priori the condition $i(\sigma)=0$ does not necessarily imply that the pullback induced by $\phi$ vanishes on $c_m$. 
However, at the moment we are not able to construct an explicit example of such situation.

On the other hand, our formulation of Theorem~\ref{teor:alg:hull}.2 suitably extends Burger-Iozzi's result~\cite[Theorem 1.1]{BIreal} in the setting of measurable cocycles.
Indeed when $\sigma$ is actually cohomologous to a non-elementary representation $\rho$, the pullback along $\phi$ boils down to the pullback along $\rho$ (Proposition~\ref{prop:pullback:coc:vs:repr}). Thus, we completely recover~\cite[Theorem 1.1]{BIreal} in this particular situation. 
\end{oss}

\section{Rigidity of the Cartan invariant}\label{sec:cartan:rigidity}

In this section we discuss some rigidy results which can be deduced using the Cartan invariant of measurable cocycles. 
We first study the algebraic hull (Definition~\ref{def:alg:hull}) of cocycles whose pullback does not vanish. 
Then, we characterize maximal measurable cocycles (Definition~\ref{def:maximal:cartan}).

We begin with the following result which is a suitable extension of Burger-Iozzi's result for representation~\cite[Theorem 1.2]{BIreal}:
\begin{repteor}{teor:alg:hull}
In the situation of Setup~\ref{setup:cartan:invariant}, let $\sigma$ be a non-elementary measurable cocycle with boundary map $\phi \colon \partial_\infty \bbH^n_{\bbC} \times X \rightarrow \partial_\infty \bbH^m_{\bbC}$. Let $\mathbf{L}$ be the algebraic hull of $\sigma$ and let $L=\mathbf{L}(\bbR)^\circ$ be the connected component of the identity of the real points. 

If $\upH^2(\Phi^X)([c_m])\neq 0$, then $L$ is an almost direct product $K \cdot M$, where $K$ is compact and $M$ is isomorphic to $\textup{PU}(p,1)$ for some $1 \leq p \leq m$. 

In particular, the symmetric space associated to $L$ is a totally geodesically embedded copy of $\bbH^p_{\bbC}$ inside $\bbH^m_{\bbC}$. 
\end{repteor}

\begin{proof}
Since $\textup{H}^2(\Phi^X)([c_m])$ does not vanish, the restriction of the bounded K\"ahler class $\kappa^b_m$ to $\upH^2_b(L; \mathbb{R})$ does not vanish (Remark~\ref{oss:cartan:repr:kaehler}). Thus, $L$ cannot be amenable. Since $\textup{PU}(m,1)$ has real rank one, $L$ is reductive with semisimple part $M$ of rank one. We denote by $K$ the compact term in the decomposition of $L$.
Since $K$ compact, whence amenable, we have that $\kappa^b_m|K=0$ (see Remark \ref{oss:notation:restriction} for the notation). Hence, $\kappa^b_m|L \neq 0$ implies $\kappa^b_m|M \neq 0$. As a consequence we have
$$
\upH^2_c(M;\bbR) \cong \upH^2_{cb}(M ; \bbR) \neq 0 \ .
$$
Thus, $M$ is a group of Hermitian type. Then, since $M$ has real rank one, we have
$$
M \cong \textup{PU}(p,1) \ ,
$$
for some $1 \leq p \leq m$. Take an isomorphism $\pi:\textup{PU}(p,1) \rightarrow M$ such that $\upH^2_{cb}(\pi)(\kappa^b_m)=\lambda \kappa^b_p$ for some $\lambda >0$. If $m \geq 2$, the map $\pi:\pu(p,1) \rightarrow \pu(m,1)$ corresponds to a totally geodesic embedding $\bbH^p_{\bbC}$ inside $\bbH^m_{\bbC}$ (which is holomorphic by the positivity of $\lambda$). When $m=1$, the group $\pi(\pu(1,1))$ cannot correspond to a totally real embedding, otherwise $\lambda=0$ by Theorem \ref{teor:totally:real}. Hence it must correspond to a complex geodesic and the statement is proved. 
\end{proof}

Among the cocycles with non-trivial pullback, maximal ones can be completely characterized. Maximal cocycles always admit a(n essentially unique) boundary map. Indeed they are non-elementary, since the latters have trivial Cartan invariant.  

\begin{repteor}{teor:cartan:rigidity}
In the situation of Setup~\ref{setup:cartan:invariant}, let $(X, \mu)$ be ergodic and let $\sigma$ be a maximal cocycle. 
Let $\mathbf{L}$ be the algebraic hull of $\sigma$ and let $L=\mathbf{L}(\bbR)^\circ$ be the connected component of the identity of the real points. 

Then, the following hold
\begin{enumerate}
\item $m \geq n$; 

\item $L$ is an almost direct product $\textup{PU}(n,1) \cdot K$, where $K$ is compact;

\item $\sigma$ is cohomologous to the cocycle $\sigma_i$ associated to the standard lattice embedding $i \colon \Gamma \to \pu(m, 1)$ (possibly modulo the compact subgroup $K$ when $m >n$). 
\end{enumerate}
\end{repteor}
\begin{proof}
One can restrict the image of $\sigma$ to its algebraic hull, which is completely characterized by Theorem \ref{teor:alg:hull}. In this way we obtain a Zariski dense cocycle. The thesis now follows \emph{verbatim} as in the proof of~\cite[Theorem 2]{sarti:savini}.
\end{proof}

\section{Concluding remarks}\label{sec:concluding:remarks}

We conclude the paper with a short list of comments that relate the notion of Cartan invariant with more recent results in this field. These results have been obtained by combining the theory developed in this paper with new insights. 

Recently one of the authors has proved a statement analogous to~Theorem \ref{teor:cartan:rigidity} but with completely different techniques~\cite[Theorem 1.2]{savini:natural:cocycle}.
 The main new ingredient was the existence of natural maps associated to a measurable cocycles. Natural maps exist for ergodic Zariski dense cocycles,
 e.g. cocycles arising from ergodic couplings \cite[Lemma 3.6]{savini:tautness}. The existence of natural maps also played an important role in the recent proof of the $1$-tautness conjecture for $\pu(n,1)$, with $n \geq 2$~\cite[Theorem 1]{savini:tautness}. This result provides a nice classification of discrete groups that are $\upL^1$-measure equivalent to a lattice $\Gamma \leq \pu(n,1)$. 
 In that situation the key point is to show that measurable cocycles arising from ergodic self-couplings associated to a uniform lattice $\Gamma \leq \pu(n,1)$ are maximal. This then implies that they are cohomologous to the standard lattice embedding by using the results of this paper. The notion of maximality introduced in \cite{savini:tautness} agrees with the one in Definition \ref{def:maximal:cartan}. This provides a wide family of cocycles which do not come from representations but they are cohomologous to them. 

Unfortunately the authors were not able to prove the $1$-tautness conjecture directly with the use of the Cartan invariant. The main obstruction to this approach concerns the study of cup products of bounded cohomology classes, which is a highly non trivial subject~\cite{Heuer:cup, BM:cup, AmonBuch}.

The study of lattices in $\pu(1,1)$ was separated from this project because it contained some additional difficulties. Recently one of the authors used some ideas of this paper to provide a complete characterization of the algebraic hull for maximal cocycles of surface groups~\cite{savini:surface} by extending Burger, Iozzi and Wienhard's \emph{tightness} to the wider setting of measurable cocycles.


\bibliographystyle{amsalpha}
\bibliography{biblionote}
\end{document}